\documentclass{amsart}
\usepackage{amsmath, amsthm, amsfonts, amssymb, amscd, stmaryrd}

\numberwithin{equation}{section}

\theoremstyle{plain}
\newtheorem{theorem}{Theorem}[section]

\newtheorem{lemma}[theorem]{Lemma}

\newtheorem{corollary}[theorem]{Corollary}

\theoremstyle{definition}

\newcommand{\sig}{\sigma}
\newcommand{\rig}{\rightarrow}
\newcommand{\Rig}{\Rightarrow}

\newcommand{\sub}{\subseteq}

\newcommand{\bs}{\backslash}

\newcommand{\lb}{\langle}
\newcommand{\rb}{\rangle}
\newcommand{\lres}{\backslash}
\newcommand{\rres}{/}
\newcommand{\di}{\Diamond}
\newcommand{\vp}{\varphi}
\newcommand{\sameas}{\leftrightharpoons}

\DeclareMathOperator{\dom}{dom}
\DeclareMathOperator{\ar}{ar}
\DeclareMathOperator{\righto}{right}
\DeclareMathOperator{\lefto}{left}
\DeclareMathOperator{\up}{up}
\DeclareMathOperator{\down}{down}
\DeclareMathOperator{\Op}{Op}
\DeclareMathOperator{\Var}{Var}

\begin{document}

\title[Universal theory of BRDG]{Complexity of the universal theory of
  bounded residuated distributive lattice-ordered groupoids}
  
\author[Shkatov]{Dmitry Shkatov} 
\address{School of Computer Science and Applied Mathematics \\
University of the Witwatersrand, Johannesburg \\
Private Bag 3 \\
Wits 2050 \\
South Africa}
\email{shkatov@gmail.com} 

\author[Van Alten]{C.J.\ Van Alten}
\address{School of Computer Science and Applied Mathematics \\
University of the Witwatersrand, Johannesburg \\
Private Bag 3 \\
Wits 2050 \\
South Africa} 
\email{clint.vanalten@wits.ac.za}

\thanks{Pre-final version of the paper published in \textit{Algebra
    Universalis}, 80(3), article 36, 2019, DOI
  https://doi.org/10.1007/s00012-019-0609-1.}

\subjclass{06F99, 06D99, 03G25}

\keywords{Universal theory, Complexity, Bounded residuated distributive lattice-ordered groupoid, 
Bounded distributive lattice with operator, Partial algebra}

\begin{abstract}
  We prove that the universal theory and the quasi-equational
  \linebreak theory of bounded residuated distributive lattice-ordered
  groupoids are both \textsf{EXPTIME}-complete.  Similar results are
  proven for bounded distributive lattices with a unary or binary
  operator and for some special classes of bounded residuated
  distributive lattice-ordered groupoids.
\end{abstract}

\maketitle

\section{Introduction}

By a {\em bounded residuated distributive lattice-ordered groupoid}, or $brdg$, for short, we mean
a bounded distributive lattice with binary operations $\circ$, $\lres$ and $\rres$ satisfying
the following property:
\[
  \mbox{$x \circ y \leqslant z \; \leftrightharpoons \; y \leqslant x \lres z\; \leftrightharpoons \; x \leqslant z \rres y$.}
\]
These algebras fall within the general framework of `residuated lattices', which are studied as algebraic semantics
for various substructural logics.
The class of $brdg$'s, in particular, is the algebraic semantics for Distributive Nonassociative Full Lambek Calculus
with Bounds. 
This logic and the corresponding class of $brdg$'s, as well as the unbounded versions thereof, 
have been investigated in connection with
categorial grammars and context-free languages by Buszkowski and Farulewski \cite{BF09},
who prove decidability of the universal theory of the class of $brdg$'s via the finite embeddability property -- see 
\cite{Far08} and \cite{BF09}. 
A \textsf{coNEXPTIME} upper bound for the universal theory of $brdg$'s is obtained
by Hanikov\'{a} and Hor\v{c}\'{i}k in \cite{HH14}.
In the present article, we improve upon the result from \cite{HH14} and obtain \textsf{EXPTIME}-completeness
of the universal theory of $brdg$'s.

The \textsf{EXPTIME} upper bound is obtained by considering the complementary problem to the universal theory,
namely, satisfiability of quantifier-free first-order formulas, to which
the methods of \cite{VA13} are applied (see also \cite{SVA}).
A quantifier-free first-order formula is satisfiable in a $brdg$ if, and only if, it is satisfiable in 
a `partial $brdg$' with cardinality not greater than the size of the formula.
By a partial $brdg$ we mean an algebraic structure
with partially defined operations that is a partial substructure of a (full) $brdg$.
We obtain here a characterization of partial $brdg$'s that allows us to describe an exponential-time algorithm
for identifying partial $brdg$'s up to a given size.
For a given formula, we then test for satisfiability in each partial $brdg$ whose size does not exceed 
the size of the formula, which can be done in time exponential in the size of the formula.

In order to obtain the \textsf{EXPTIME} lower bound for $brdg$'s, we consider the class of {\em bounded distributive lattices with operators},
or $bdo$'s, for short, by which we mean bounded distributive lattices with
unary operation $\di$ that distributes over joins and satisfies $\di 0 = 0$.
The lower bound is obtained by reduction from a two person corridor tiling problem 
that is known to be \textsf{EXPTIME}-hard \cite{Chlebus86}.
Starting with an instance of the two person corridor tiling problem, say $T$, following \cite{Chlebus86},
we construct a formula $\phi_T$ in the language of a modal logic with the universal modality (see~\cite{GP92})
such that $T$ is a `good' instance if, and only if, $\phi_T$ is satisfiable in a Kripke model.
We then construct a quantifier-free first-order formula $\varphi_T$ in the language of $bdo$'s
and show that $\varphi_T$ is satisfiable in a $bdo$ if, and only if, 
$\phi_T$ is satisfiable in a Kripke model,
establishing the \textsf{EXPTIME} lower bound for satisfiability in $bdo$'s.
Lastly, we reduce satisfiability in $bdo$'s to satisfiability in $brdg$'s, proving \textsf{EXPTIME}-completeness for
the latter decision problem.
As a consequence, we also obtained \textsf{EXPTIME}-completeness for satisfiability in $bdo$'s.
The \textsf{EXPTIME}-completeness of the universal theory of these classes then follows.

We also consider some subclasses of $brdg$'s that correspond to well-known extensions of the associated logic.
In particular, we consider $brdg$'s whose operation $\circ$ satisfies any combination of the following properties: 
commutative, decreasing, square-increasing and unital,
corresponding to the structural rules of exchange, weakening, contraction and having a truth constant.
The method used here for obtaining the \textsf{EXPTIME} upper bound for $brdg$'s is extended
to all these cases.
The  \textsf{EXPTIME} lower bound proof, however, is extended to the commutative case only.

The Nonassociative Lambek Calculus was introduced in \cite{Lam61} as the logic of sentence structure, or categorical grammar, 
see, e.g., \cite{VBM97}, \cite{Moo88}, \cite{Mor94}.
Models for this logic are `residuated ordered groupoids', that is, posets with operations 
$\circ$, $\lres$ and $\rres$ satisfying the above property.
When the poset is a lattice, such structures are algebraic models for the
Nonassociative {\em Full} Lambek Calculus.
Such algebraic structures have been considered in a different context, namely, as residuated lattices
(although associativity of $\circ$ is usually assumed).
For a recent overiew of the field, see, e.g., \cite{GJKO}.

We mention some decidability and complexity results for classes of algebras closely related to $brdg$'s.
Firstly, consider the case where the operation $\circ$ is associative and commutative.
This class has an undecidable universal theory, which follows from the undecidability
of the word problem for this class \cite{Urq84}.
The equational theory for this class, however,  is decidable \cite{GR04}.
If associativity is assumed, but not commutativity, the undecidability of the universal theory
is proved in \cite{Gal02}, whereas the equational theory is proved decidable in \cite{Koz09}.
In \cite{GJ17}, cut-elimination and finite model property are proved for various extensions of 
Distributive Full Lambek Calculus, proving decidability of
the equational theories of the corresponding classes of algebras.

Next, we mention cases in which the lattice is not required to be distributive.
This class of `lattice-ordered residuated groupoids' has an undecidable universal theory \cite{Chv15},
whereas its equational theory is in \textsf{PSPACE} \cite{BF09}.
If associativity of $\circ$ is assumed, the undecidability of the universal theory is proved in \cite{JT02}. 
If the associative, commutative and unital properties are assumed, undecidability of the universal theory follows
from \cite{LMSS92} (see \cite{BV02}), and the \textsf{PSPACE}-completeness of the equational theory is proved in \cite{LMSS92}.
If the lattice operations are dropped altogether, that is, we consider residuated ordered groupoids,
the (quasi-)equational theory is in \textsf{P} \cite{Bus05}, and its associative extension
has an \textsf{NP}-complete equational theory \cite{Pen06}.

Lastly, suppose the assumption of `integrality' is added, that is, the decreasing and unital properties hold
and the greatest element is the unit for the $\circ$ operation.
In the case without the distributivity axiom, the universal theory is decidable.
This was shown in \cite{BV05}, where the corresponding class of algebras is shown to have the finite embeddability property, or {\em fep},
which implies decidability of the universal theory.
The result also holds if $\circ$ is associative and/or commutative and if the lattice operations are dropped.
In the distributive lattice case, the {\em fep} is obtained in \cite{GJ17}
for various classes of integral $brdg$'s, including also the associative and/or commutative cases.

\section{Bounded residuated distributive lattice-ordered groupoids}
\label{sec:rdl}

In this section we describe the main classes of algebras that are considered in this paper and
give some of the properties we shall require in later sections.
We also describe the relational representations for these classes that form the basis of the constructions  
used for the characterizations of partial algebras in Section~\ref{sec:partial-DRL} and for establishing the lower bound complexity
in Section~\ref{sec:rdl-lower-bound}.
The relational representations and related results are obtained from \cite{DH}.

We assume familiarity with standard universal algebraic and lattice theoretic notions, as can be found in \cite{BS81}
and \cite{DP}, for example, and we follow standard notational conventions.

The algebraic structures we consider all have an underlying bounded distributive lattice structure.
Recall that the natural order associated with a distributive lattice is defined by 
the following condition: $x \leqslant y$ iff $x \wedge y = x$.
We shall include the order relation $\leqslant$ in the type of our algebras as this will be useful for
the characterization of partial structures.
We use $0$ and $1$ to denote the least and greatest elements, respectively, of a lattice.
We recall the following notions.
Let ${\mathbb{D}} = \lb D, \wedge, \vee, 0, 1, \leqslant \rb$ be a bounded distributive lattice.
A {\em filter} of ${\mathbb{D}}$ is a subset $F \sub D$ that is upward closed,
i.e., $a \in F$ and $a \leqslant b$ imply $b \in F$, and closed under meets, i.e.,
$a, b \in F$ imply $a \wedge b \in F$.
A {\em prime filter} of ${\mathbb{D}}$ is a filter $F$ such that $1 \in F$, $0 \notin F$ and, whenever $a \vee b \in F$, either $a \in F$ or $b \in F$
(equivalently, $D \bs F$ is closed under joins).

By a {\em bounded residuated distributive lattice-ordered groupoid}, $brdg$ for short, we mean an algebra
${\mathbb{A}} = \langle A, \wedge, \vee, \circ, \lres, \rres, 0, 1, \leqslant \rangle$,
where $\langle A, \wedge, \vee, 0, 1, \leqslant \rangle$ is a bounded distributive lattice,
$\circ$, $\lres$, and $\rres$ are binary operations and ${\mathbb{A}}$ satisfies the following residuation property:
\begin{equation}  \label{Res}
 x \circ y \leqslant z \;\leftrightharpoons\; y \leqslant x \lres z
  \;\leftrightharpoons\; x \leqslant z \rres y.
\end{equation}
Notice that any $brdg$ satisfies $0 \circ x = 0 = x \circ 0$ and
\begin{equation} \label{o dist}
  x \circ ( y \vee z ) = (x \circ y) \vee (x \circ z)\quad \mbox{and}\quad
  ( y \vee z ) \circ x = (y \circ x) \vee (z \circ x),
\end{equation}
that is,  $\circ$ distributes over joins.
In addition, $\circ$ is order-preserving in both co-ordinates, $\lres$ is
order-preserving in its second co-ordinate and order-reversing in its first, while $\rres$ is 
order-preserving in its first co-ordinate and order-reversing in its second.
The following properties are also satisfied by $brdg$'s:
\begin{align}  
 & x \circ (x \lres y) \leqslant y \quad\quad\quad & (y \rres x) \circ x \leqslant y   \label{dlrg 1}  \\
 & y \leqslant x \lres ( x \circ y) & y \leqslant ( y \circ x) \rres x \label{dlrg 2} \\
 & x \leqslant   ((x \circ y) \vee z) \rres y & y \leqslant x \lres ((x \circ y) \vee z).  \label{dlrg 3}
\end{align}
The class of all $brdg$'s, which we denote by $BRDG$, is a variety as it has an equational axiomatization
given by the bounded distributive lattice identities, together with the identities in \eqref{o dist}, \eqref{dlrg 1} and \eqref{dlrg 3} \cite{GO10}.

By a $brdg$-{\em frame} we mean a structure
$\mathfrak{F} = \langle P, \leqslant, R \rangle$,
where $P$ is a nonempty set, $\leqslant$ is a partial order on $P$,
 and $R$ is a ternary relation on $P$ that satisfies
 \begin{align}
  & (\forall x, x', y, z \in P) (R (x, y, z) \;\mbox{\bf and}\; x' \leqslant x \Rig R(x', y, z))  \label{FR1} \\
  & (\forall x, y, y', z \in P) (R (x, y, z) \;\mbox{\bf and}\; y' \leqslant y \Rig R(x, y', z))  \label{FR2} \\
  & (\forall x, y, z, z' \in P) (R (x, y, z) \;\mbox{\bf and}\; z \leqslant z' \Rig R(x, y, z')).  \label{FR3}
\end{align}

Given a $brdg$ ${\mathbb{A}}$, by its associated $brdg$-frame $\mathfrak{F}_{\mathbb{A}}$,
we mean the structure obtained as follows.
Let $\mathcal P$ be the set of prime filters of (the underlying distributive lattice of) ${\mathbb{A}}$ and
$\mathcal R$ the ternary relation on $\mathcal P$ defined by:
\begin{equation} \label{R def}
\mathcal R (F,G,H) \leftrightharpoons (\forall\, a,b \in A) ( a \in F \;\mbox{\bf and}\;
b \in G \Rig a \circ b \in H).
\end{equation}
Then, set $\mathfrak{F}_{\mathbb{A}} = \lb \mathcal P, \sub, \mathcal R \rb$.
It is straightforward to check that $\mathfrak{F}_{\mathbb{A}}$ satisfies (\ref{FR1}--\ref{FR3}), hence it is a $brdg$-frame.

\begin{lemma} \cite{DH}
  \label{cl:pastbox-rel}
Let ${\mathbb{A}}$ be a $brdg$ and
$\mathfrak{F}_{\mathbb{A}} = \langle \mathcal P, \sub, \mathcal R \rangle$ its associated $brdg$-frame.
Then, for $F, G, H \in \mathcal P$, the following are equivalent:
\begin{itemize}
\item[(i)] $\mathcal R(F,G,H)$,
\item[(ii)] $(\forall\, a,b \in A) (a \in F \;\mbox{\bf and}\;  a \lres b \in G \Rig b \in H)$,
\item[(iii)] $(\forall\, a, b \in A) (b \rres a \in F \;\mbox{\bf and}\;  a \in G \Rig b \in H).$
\end{itemize}
\end{lemma}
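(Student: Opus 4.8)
The plan is to prove the cycle of equivalences, but it is cleanest to observe that (i)\,$\Leftrightarrow$\,(ii) and (i)\,$\Leftrightarrow$\,(iii) hold by two essentially symmetric arguments: the first uses the left residual $\lres$ together with the left-hand instances of \eqref{dlrg 1} and \eqref{dlrg 2}, the second uses $\rres$ together with the right-hand instances of those same properties (and swaps the roles of the two arguments of $\circ$). So I would write out (i)\,$\Leftrightarrow$\,(ii) in full and then remark that interchanging $\lres$ with $\rres$ yields (i)\,$\Leftrightarrow$\,(iii). Throughout, the only structural fact used about $F,G,H$ is that they are upward closed; primeness plays no role in this particular lemma.

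For (i)\,$\Rightarrow$\,(ii): assume $\mathcal R(F,G,H)$ and take $a,b \in A$ with $a \in F$ and $a \lres b \in G$. Applying the defining condition \eqref{R def} to the pair $a$, $a \lres b$ gives $a \circ (a \lres b) \in H$. By \eqref{dlrg 1} we have $a \circ (a \lres b) \leqslant b$, and since $H$ is a filter, hence upward closed, we get $b \in H$. For (ii)\,$\Rightarrow$\,(i): assume (ii) and take $a \in F$, $b \in G$; we must show $a \circ b \in H$. By \eqref{dlrg 2}, $b \leqslant a \lres (a \circ b)$, and since $G$ is a filter this yields $a \lres (a \circ b) \in G$. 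Now apply the hypothesis (ii) with the pair $a$, $a \circ b$ in the roles of its $a$, $b$: from $a \in F$ and $a \lres (a \circ b) \in G$ we conclude $a \circ b \in H$, as required.

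The equivalence (i)\,$\Leftrightarrow$\,(iii) is proved the same way. For the forward direction one applies \eqref{R def} to the pair $b \rres a$, $a$ to get $(b \rres a) \circ a \in H$ and then uses $(b \rres a) \circ a \leqslant b$ from \eqref{dlrg 1}; for the converse one uses $a \leqslant (a \circ b) \rres b$ from \eqref{dlrg 2} to place $(a \circ b) \rres b$ in $F$, and then invokes (iii). There is no genuine obstacle here: the argument is a short manipulation of the residuation properties, and the only point requiring a little care is the correct instantiation of the universally quantified variables when applying \eqref{R def}, (ii) and (iii).
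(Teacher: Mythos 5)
Your proof is correct and follows essentially the same route as the paper's: the forward directions instantiate the definition of $\mathcal R$ at $(a,\,a\lres b)$ (resp.\ $(b\rres a,\,a)$) and use \eqref{dlrg 1} with upward closure, and the converses use \eqref{dlrg 2} to place $a\lres(a\circ b)$ in $G$ (resp.\ $(a\circ b)\rres b$ in $F$). Your added remark that only upward closure of the filters is needed is accurate but does not change the argument.
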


\begin{proof}
Let $F$, $G$ and $H$ be prime filters and 
suppose that $\mathcal R(F,G,H)$.
If $a, b \in A$ with $a \in F$ and $a \lres b \in G$, then $a \circ (a \lres b) \in H$. 
By \eqref{dlrg 1}, $a \circ (a \lres b) \leqslant b$, so $b \in H$, as required for (ii).
Conversely, suppose (ii) holds and let $a, b \in A$ with  $a \in F$ and $b \in G$.
By \eqref{dlrg 2}, $b \leqslant a \lres ( a \circ b)$, so
$a \lres ( a \circ b) \in G$, and hence $a \circ b \in H$, as required for $\mathcal R(F,G,H)$.
Thus, (i) and (ii) are equivalent.
A similar proof shows that (i) and (iii) are equivalent.
\end{proof}

\begin{lemma} \cite{DH}
  \label{cl:Jonsson-Tarski-fusion}
Let ${\mathbb{A}}$ be a $brdg$ and
$\mathfrak{F}_{\mathbb{A}} = \langle \mathcal P, \sub, \mathcal R \rangle$ its associated $brdg$-frame.
Let $a, b \in A$.
\begin{itemize}
\item[(i)]  If $H \in \mathcal P$ and $a \circ b \in H$, then there exist $F, G \in \mathcal P$
 such that $a \in F$, $b \in G$, and $\mathcal R(F, G, H)$.
\item[(ii)] If $G \in \mathcal P$ and $a \lres b \notin G$, then there exist $F, H \in \mathcal P$
 such that $a \in F$, $b \notin H$, and $\mathcal R(F, G, H)$.
\item[(iii)] If $F \in \mathcal P$ and $b \rres a \notin F$, then there exist $G, H \in \mathcal P$
 such that $a \in G$ and $b \notin H$ and $\mathcal R(F,G,H)$.
\end{itemize}
\end{lemma}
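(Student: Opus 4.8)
The plan is to prove each of the three parts by the standard Jónsson–Tarski-style prime-filter extension argument, using the distributive-lattice prime-filter separation theorem (the Birkhoff–Stone lemma): if $I$ is an ideal, $J$ a filter, and $I \cap J = \emptyset$, then there is a prime filter containing $J$ and disjoint from $I$. Throughout I will use the residuation property \eqref{Res} and the distribution law \eqref{o dist}, together with Lemma~\ref{cl:pastbox-rel} to convert between the three equivalent formulations of $\mathcal R$ when convenient.

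For part (i), suppose $H \in \mathcal P$ and $a \circ b \in H$. First I would fix the prime filter $F$: take the principal filter $\mathord{\uparrow}a = \{x : a \leqslant x\}$ and show it can be extended to a prime filter $F$ with the property that $(\forall x \in F)\, x \circ b \in H$. The natural obstruction set is the ideal $I = \{y \in A : \text{for all } x \text{ with } a \leqslant x,\ \text{"}x\text{" is not relevant}\}$; more precisely, I would consider the set $S = \{x \in A : x \circ b \notin H\}$ and show it is closed downward and under finite joins — downward closure because $\circ$ is order-preserving, and closure under joins because $(x \lor x') \circ b = (x \circ b) \lor (x' \circ b)$ by \eqref{o dist} and $H$ is prime. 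Since $a \circ b \in H$, we have $a \notin S$, so $\mathord{\uparrow}a$ is a filter disjoint from the ideal $S$; by prime filter separation there is a prime filter $F \supseteq \mathord{\uparrow}a$ with $F \cap S = \emptyset$, i.e.\ $a \in F$ and $x \circ b \in H$ for all $x \in F$. Now repeat the trick on the second coordinate: the set $S' = \{y \in A : \text{there exists } x \in F \text{ with } x \circ y \notin H\}$ — wait, this need not be an ideal directly, so instead I would fix $F$ and argue with $T = \{y : x \circ y \in H \text{ for all } x \in F\}$, showing $T$ is a filter containing $b$ (it is upward closed and meet-closed since if $x \circ y, x \circ y' \in H$ then, because $x \circ (y \land y') \leqslant (x\circ y) \land (x \circ y')$ and $H$ is upward closed... actually we need $\leqslant$ the other way, so I would instead directly take the principal filter $\mathord{\uparrow}b$ and the ideal $U = \{y : \exists x \in F,\ x \circ y \notin H\}$, checking $U$ is downward closed and join-closed using \eqref{o dist} and primeness of $H$ in the second coordinate, and $b \notin U$). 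Separating gives a prime filter $G \supseteq \mathord{\uparrow}b$ disjoint from $U$, so $b \in G$ and $x \circ y \in H$ for all $x \in F, y \in G$, which is exactly $\mathcal R(F,G,H)$.

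For parts (ii) and (iii), I would argue symmetrically but in the "backward" direction using the characterizations from Lemma~\ref{cl:pastbox-rel}. For (ii): given $G \in \mathcal P$ with $a \lres b \notin G$, I want $F \ni a$ and $H \not\ni b$ with $\mathcal R(F,G,H)$ in the form (ii) of Lemma~\ref{cl:pastbox-rel}, namely $(\forall c,d)(c \in F \text{ and } c \lres d \in G \Rightarrow d \in H)$. I would first build $H$: take the ideal $\mathord{\downarrow}b$ and the filter $V = \{d : a \lres d \in G\}$ (this is a filter: upward closed since $\lres$ is order-preserving in its second coordinate, meet-closed since $a \lres (d \land d') = (a \lres d) \land (a \lres d')$ which holds because $\lres$ turns meets in the numerator into meets, and $G$ is a filter); since $a \lres b \notin G$, $b \notin V$, so $\mathord{\downarrow}b \cap V = \emptyset$ and separation yields a prime filter $H \supseteq V$ with $b \notin H$. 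Then build $F$: consider $\mathord{\uparrow}a$ and the ideal $W = \{c : \exists d \notin H,\ c \lres d \in G\}$ — checking $W$ is an ideal uses that $\lres$ is order-reversing in its first coordinate (downward closure) and that $(c \lor c') \lres d = (c \lres d) \land (c' \lres d)$ together with primeness; and $a \notin W$ since if $a \lres d \in G$ then $d \in V \subseteq H$. Separation gives prime $F \supseteq \mathord{\uparrow}a$ disjoint from $W$, and then $\mathcal R(F,G,H)$ in form (ii). Part (iii) is the mirror image with $\rres$ in place of $\lres$, using form (iii) of Lemma~\ref{cl:pastbox-rel}.

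The main obstacle I anticipate is verifying in each case that the putative obstruction set is genuinely an ideal (or filter) — in particular, the closure-under-joins step for the ideals, which is where one must invoke the distribution laws \eqref{o dist} (for part (i)) or the interaction of $\lres, \rres$ with joins in the appropriate coordinate (for (ii), (iii)), combined with primeness of the ambient filter $H$ or $G$. The other delicate point is the two-stage nature of the construction: one coordinate is fixed first by separation, and only then is the obstruction set for the second coordinate well-defined; one must be careful that fixing $F$ (say) does not destroy the disjointness needed to extract $G$, which is why I phrase the second obstruction set in terms of the already-chosen $F$. Once the bookkeeping is set up correctly, each verification is routine given \eqref{Res}, \eqref{o dist}, the monotonicity properties listed before the lemma, and Lemma~\ref{cl:pastbox-rel}.
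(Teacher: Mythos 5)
The paper does not actually prove Lemma~\ref{cl:Jonsson-Tarski-fusion}: it is stated with a citation to \cite{DH} and no argument is given, so there is nothing in the text to compare your proof against line by line. That said, your argument is correct and is the standard J\'onsson--Tarski prime-filter extension proof that \cite{DH} has in mind. All the lattice identities you invoke do hold in a $brdg$: $a \lres (d \wedge d') = (a \lres d) \wedge (a \lres d')$ and $(c \vee c') \lres d = (c \lres d) \wedge (c' \lres d)$ both follow directly from \eqref{Res} and \eqref{o dist}, and your two-stage structure (fix one prime filter by separation, then define the obstruction set for the other coordinate relative to it) is exactly what makes the bookkeeping go through. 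The one place where you wave your hands slightly is the join-closure of the second-stage obstruction sets, where the two members come with \emph{different} witnesses: for $U = \{y : \exists x \in F,\ x \circ y \notin H\}$ in part (i) you must replace the two witnesses $x, x'$ by their meet $x \wedge x'$ (which is again in $F$, and works by order-preservation of $\circ$) before applying \eqref{o dist} and primeness of $H$; dually, for $W = \{c : \exists d \notin H,\ c \lres d \in G\}$ in part (ii) you must replace the witnesses $d, d'$ by $d \vee d'$ (which is outside $H$ by primeness of $H$) before using $(c \vee c') \lres (d \vee d') = (c \lres (d \vee d')) \wedge (c' \lres (d \vee d'))$ and meet-closure of $G$. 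These are routine repairs and do not affect the soundness of the proof; with them spelled out, your argument is complete.
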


Let $\mathfrak{F} = \langle P, \leqslant, R \rangle$ be a $brdg$-frame.
We construct a $brdg$ ${\mathbb{A}}_\mathfrak{F}$ associated with $\mathfrak{F}$ as follows.
First, for arbitrary $X, Y \subseteq P$, define the following sets:
\begin{align*}
  X \circ Y & =  \{ z \in P \mid (\exists\, x, y \in P)\,
                    (x \in X \;\mbox{\bf and}\; y \in Y \;\mbox{\bf and}\; R(x, y, z)) \}, \\
  X \lres Y & =  \{ y \in P \mid (\forall\, x, z \in P)\,
                  ( R(x, y, z) \;\mbox{\bf and}\; x \in X \Rig  z \in Y) \}, \\
  Y \rres X & =  \{ x \in P \mid (\forall\, y, z \in P)\,
                  ( R(x, y, z) \;\mbox{\bf and}\; y \in X \Rig  z \in Y) \}.
\end{align*}
Let $\mathfrak{U}(P)$ be the set of all upclosed subsets of $P$.
Then 
$\lb \mathfrak{U}(P), \cap, \cup, \varnothing, P, \sub \rb$ is a bounded distributive lattice.
That $\circ$, $\lres$ and $\rres$ are operations on $\mathfrak{U}(P)$
follows directly from (\ref{FR1}--\ref{FR3}), and \eqref{Res} follows from the definitions of the operations.
Thus, $\langle \mathfrak{U}(P), \cap, \cup, \circ, \lres, \rres, \varnothing, P, \sub \rangle$
is a $brdg$, which we denote by ${\mathbb{A}}_\mathfrak{F}$.

Let ${\mathbb{A}}$ be a $brdg$ and $\mathfrak{F}_{\mathbb{A}} = \lb \mathcal P, \sub, \mathcal R \rb$
its associated $brdg$-frame.
For each $a \in A$, let $\mu(a) = \{ F \in \mathcal P : a \in F \}$.
Then $\mu$ is an embedding of the algebra ${\mathbb{A}}$ into the algebra ${\mathbb{A}}_{\mathfrak{F}_{\mathbb{A}}}$ \cite{DH}.
Thus, every $brdg$ can be represented as a subalgebra of a
$brdg$ ${\mathbb{A}}_\mathfrak{F}$ constructed out of a $brdg$-frame $\mathfrak{F}$.

By a {\em bounded distributive lattice with (normal) binary operator}, $bdbo$ for short, we mean an algebra  
${\mathbb{A}} = \langle A, \wedge, \vee, \circ, 0, 1, \leqslant \rangle$, where 
$\langle A, \wedge, \vee, 0, 1, \leqslant \rangle$ is a bounded distributive lattice,
$\circ$ is a binary operation and ${\mathbb{A}}$ satisfies $x \circ 0 = 0$ $= 0 \circ x$ and \eqref{o dist}.
The class of all $bdbo$'s, which we denote by $BDBO$, is a variety.

Clearly, the $\lres, \rres$-free reduct of any $brdg$ is a $bdbo$.
Moreover, every $bdbo$ can be embedded into a $brdg$ as follows.
Let ${\mathbb{A}}$ be a $bdbo$.
We construct the frame $\mathfrak{F}_{\mathbb{A}} = \langle \mathcal P, \sub \mathcal R \rangle$
as above, where $\mathcal P$ is the set of prime filters of ${\mathbb{A}}$ and 
$\mathcal R$ is defined as in \eqref{R def}.
The conditions (\ref{FR1}--\ref{FR3}) hold in this case as well, so $\mathfrak{F}_{\mathbb{A}}$ is a $brdg$-frame.
Thus, we may construct the $brdg$ ${\mathbb{A}}_{\mathfrak{F}_{\mathbb{A}}}$ into which ${\mathbb{A}}$ embeds by the map $\mu$.

By a {\em bounded distributive lattice with (normal) operator}, $bdo$ for short, we mean an algebra  
${\mathbb{A}} = \langle A, \wedge, \vee, \Diamond, 0, 1, \leqslant \rangle$, where
$\langle A, \wedge, \vee, 0, 1, \leqslant \rangle$ is a bounded distributive lattice and
$\Diamond$ is a unary operation such that ${\mathbb{A}}$ satisfies $\Diamond\, 0 = 0$ and 
$\Diamond (x \vee y) = \Diamond\, x \vee \Diamond\, y$.
We note that $\Diamond$ is order-preserving.
The class of all $bdo$'s is a variety, which we denote by $BDO$.

By a $bdo$-{\em frame} we mean a structure $\mathfrak{F} = \lb P, \leqslant, R \rb$, 
where $P$ is a non-empty set, $\leqslant$ is a partial order on $P$ and $R$ is a binary relation on  $P$ such that 
the following holds:
\begin{equation}  \label{R}
 (\forall x, x', y \in P) (R (x,y) \;\mbox{\bf and}\; x \leqslant x' \Rig R (x', y)).
\end{equation}

Given a $bdo$ ${\mathbb{A}}$, we define the
$bdo$-frame associated with ${\mathbb{A}}$ as follows.
Let $\mathfrak{F}_{\mathbb{A}} = \lb \mathcal P, \sub, \mathcal R \rb$, 
where $\mathcal P$ is the set of prime filters of ${\mathbb{A}}$
and $\mathcal R$ is the binary relation on $\mathcal P$ defined by:
\begin{equation} \label{R2 def}
\mathcal R (F, G ) \leftrightharpoons (\forall\, a \in A) ( a \in G \Rig \Diamond a \in F).
\end{equation}
It is straightforward to check that \eqref{R} is satisfied by $\mathfrak{F}_{\mathbb{A}}$, so 
$\mathfrak{F}_{\mathbb{A}}$ is a $bdo$-frame.

\begin{lemma}  \cite{DH}
  \label{JT1 lem}
Let ${\mathbb{A}}$ be a $bdo$ and $\mathfrak{F}_{\mathbb{A}} = \lb \mathcal P, \sub, \mathcal R \rb$
its associated $bdo$-frame. 
If $F \in \mathcal P$ and $a \in A$ such that
$\Diamond a \in F$, then there exists $G \in \mathcal P$ 
such that $a \in G$ and $\mathcal R(F,G)$.
\end{lemma}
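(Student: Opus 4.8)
The plan is to obtain $G$ as a prime filter separating the principal filter generated by $a$ from a suitable ideal determined by $F$, using the Prime Filter Separation Theorem for bounded distributive lattices (if $\mathcal F'$ is a filter, $\mathcal I'$ an ideal, and $\mathcal F' \cap \mathcal I' = \varnothing$, then some prime filter contains $\mathcal F'$ and is disjoint from $\mathcal I'$); this is the distributive-lattice analogue of the Boolean ultrafilter theorem and is available from the references cited above.

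First I would fix $F \in \mathcal P$ and $a \in A$ with $\di a \in F$, and put
\[
  \mathcal F' = {\uparrow} a = \{ x \in A : a \leqslant x \}, \qquad
  \mathcal I' = \{ b \in A : \di b \notin F \}.
\]
The next step is to check that $\mathcal I'$ is an ideal of ${\mathbb A}$. It is downward closed: if $b' \leqslant b$ and $b \in \mathcal I'$, then $\di b' \leqslant \di b$ since $\di$ is order-preserving, so $\di b' \in F$ would force $\di b \in F$ by upward closure of $F$, contradicting $b \in \mathcal I'$. It is closed under joins: if $b, c \in \mathcal I'$, then $\di b, \di c \notin F$, so $\di b \vee \di c \notin F$ since $F$ is prime, and $\di(b \vee c) = \di b \vee \di c$, whence $b \vee c \in \mathcal I'$. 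Also $0 \in \mathcal I'$ because $\di 0 = 0 \notin F$ ($F$ being prime). Meanwhile $\mathcal F'$ is plainly a filter, and $\mathcal F' \cap \mathcal I' = \varnothing$: if $a \leqslant x$ and $\di x \notin F$, then $\di a \leqslant \di x$, and $\di a \in F$ together with upward closure of $F$ would give $\di x \in F$, a contradiction. (In particular $0 \notin \mathcal F'$, so $\mathcal F'$ is a proper filter.)

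Finally, applying the Prime Filter Separation Theorem yields a prime filter $G \in \mathcal P$ with $\mathcal F' \sub G$ and $G \cap \mathcal I' = \varnothing$. From $\mathcal F' \sub G$ we get $a \in G$, and from $G \cap \mathcal I' = \varnothing$ we get that every $b \in G$ satisfies $\di b \in F$, which by \eqref{R2 def} is precisely $\mathcal R(F, G)$. I do not anticipate a real obstacle here: the single delicate point is verifying that $\mathcal I'$ is closed under joins, where both the primeness of $F$ and the fact that $\di$ distributes over joins are used; the rest of the argument needs only that $\di$ is order-preserving and that $F$ is an upward closed proper filter.
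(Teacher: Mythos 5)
The paper does not prove this lemma itself but cites it from [DH]; your argument is the standard one (a Stone-type prime filter separation between the filter ${\uparrow}a$ and the ideal $\{b : \Diamond b \notin F\}$) and it is correct: the ideal verification uses exactly the two defining properties $\Diamond(x \vee y) = \Diamond x \vee \Diamond y$ and $\Diamond 0 = 0$ together with primeness of $F$, and disjointness from the ideal translates directly into $\mathcal R(F,G)$ via \eqref{R2 def}. No gaps.
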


Given a $bdo$-frame $\mathfrak{F} = \langle P, \leqslant, R \rangle$, we define the
$bdo$ ${\mathbb{A}}_\mathfrak{F}$ associated with it as follows.
For an arbitrary $X \subseteq P$, define the set $\Diamond X$ as follows:
\begin{equation} 
  \Diamond\, X =  \{ x \in P \mid (\exists\, y \in P)\,
                     (y \in X \;\mbox{\bf and}\; R(x, y)) \}.   \label{DB1 def}
\end{equation}
It follows from \eqref{R} that $\Diamond$ is an operation on $\mathfrak{U}(P)$, the set of all upclosed subsets of $P$.
From the definition of $\Diamond$ it follows that  $\lb \mathfrak{U}(P), \cap, \cup, \Diamond, \varnothing, P, \sub \rb$ is a $bdo$;
we denote this algebra by ${\mathbb{A}}_\mathfrak{F}$.

Let ${\mathbb{A}}$ be a $bdo$ and $\mathfrak{F}_{\mathbb{A}} = \lb \mathcal P, \sub, \mathcal R \rb$
its associated $bdo$-frame.
For each $a \in A$, let $\mu(a) = \{ F \in \mathcal P : a \in F \}$.
Clearly, $\mu(a) \in \mathfrak{U}(\mathcal P)$.
Moreover, $\mu$ is an embedding of ${\mathbb{A}}$ into ${\mathbb{A}}_{\mathfrak{F}_{\mathbb{A}}}$ \cite{DH}.
Thus, every $bdo$ can be represented as a subalgebra of a
$bdo$ ${\mathbb{A}}_\mathfrak{F}$.

\newpage

\begin{lemma}
  \label{lem:dlo-to-rdl}
Let ${\mathbb{A}} = \langle A, \wedge, \vee, \Diamond, 0, 1, \leqslant \rangle$ be a $bdo$ and let
$\circ$ be the binary operation on $A$ defined by
$x \circ y := \Diamond (x \wedge y)$.
Then, ${\mathbb{A}}^{\circ} = \langle A, \wedge, \vee, \circ, 0, 1, \leqslant \rangle$
is a $bdbo$.
\end{lemma}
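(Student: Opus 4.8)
The plan is to verify directly that $\mathbb{A}^{\circ}$ satisfies the defining conditions of a $bdbo$. The lattice reduct $\langle A, \wedge, \vee, 0, 1, \leqslant \rangle$ is left untouched by the construction, so it remains a bounded distributive lattice; hence the only things that need checking are that $\circ$ annihilates $0$ on both sides and that $\circ$ distributes over $\vee$ in each coordinate, i.e.\ the identities in \eqref{o dist}.

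For the annihilation conditions I would compute $x \circ 0 = \Diamond (x \wedge 0) = \Diamond\, 0 = 0$, using that $0$ is the least element of the lattice (so $x \wedge 0 = 0$) together with the defining assumption $\Diamond\, 0 = 0$ for a $bdo$; the identity $0 \circ x = 0$ follows in exactly the same way, since $0 \wedge x = 0$. This takes care of the condition $x \circ 0 = 0 = 0 \circ x$.

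For distributivity over joins, the two key steps are: first apply distributivity of the lattice to rewrite $x \wedge (y \vee z)$ as $(x \wedge y) \vee (x \wedge z)$, and then apply additivity of $\Diamond$, namely $\Diamond (u \vee v) = \Diamond\, u \vee \Diamond\, v$. Concretely, $x \circ (y \vee z) = \Diamond (x \wedge (y \vee z)) = \Diamond ((x \wedge y) \vee (x \wedge z)) = \Diamond (x \wedge y) \vee \Diamond (x \wedge z) = (x \circ y) \vee (x \circ z)$. The right-hand distributivity $(y \vee z) \circ x = (y \circ x) \vee (z \circ x)$ is obtained identically; the only extra observation is that $\wedge$ is commutative, so $\circ$ is in fact commutative and the two coordinates are symmetric. (One could also note in passing that $\circ$ is order-preserving, but this is not part of the definition of a $bdbo$ and in any case follows from additivity.)

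There is no genuine obstacle here: the statement is a routine consequence of lattice distributivity and the normality/additivity of $\Diamond$. The only point worth a moment's attention is to record $x \circ 0 = 0$ and $0 \circ x = 0$ as the two separate conjuncts that the definition of $bdbo$ demands, even though both collapse to $\Diamond\, 0 = 0$.
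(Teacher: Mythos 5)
Your proof is correct and follows exactly the same route as the paper's: compute $x \circ 0 = \Diamond(x \wedge 0) = \Diamond\,0 = 0$ (and symmetrically), then combine lattice distributivity with additivity of $\Diamond$ to get distribution of $\circ$ over joins in each coordinate. Your remark that $\circ$ is commutative matches the paper's own observation immediately following the lemma.
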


\begin{proof} 
We have $0 \circ x = \Diamond (0 \wedge x) = \Diamond\, 0 = 0$ and, similarly, $x \circ 0 = 0$.
We derive
$x \circ (y \vee z) = \Diamond (x \wedge (y \vee z)) = \Diamond (
(x \wedge y) \vee (x \wedge z) ) = \Diamond (x \wedge y ) \vee
\Diamond (x \wedge z)$ $= (x \circ y) \vee (x \circ z)$.
Similarly, we may derive $(y \vee z) \circ x = (y \circ x) \vee (z \circ x)$.
\end{proof}

We note, for later, that the operation $x \circ y := \Diamond (x \wedge y)$, defined
in the above lemma, is commutative.

\begin{lemma}
  \label{lem:rdl-to-dlo}
Let ${\mathbb{A}} = \langle A, \wedge, \vee, \circ, 0, 1, \leqslant \rangle$ be
a $bdbo$ and let $\Diamond$ be the unary operation on $A$ defined by
$\Diamond x := x \circ 1$. 
Then, ${\mathbb{A}}^\Diamond = \langle A, \wedge, \vee, \Diamond, 0, 1, \leqslant \rangle$
is a $bdo$.
\end{lemma}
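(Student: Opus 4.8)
The plan is to verify directly that $\Diamond$, as defined by $\Diamond x := x \circ 1$, satisfies the two defining identities of a $bdo$, namely $\Diamond\, 0 = 0$ and $\Diamond(x \vee y) = \Diamond x \vee \Diamond y$; the underlying bounded distributive lattice structure is unchanged, so nothing needs to be checked there. First I would compute $\Diamond\, 0 = 0 \circ 1 = 0$, using the $bdbo$ axiom $0 \circ x = 0$ with $x = 1$. Then for the join identity I would compute $\Diamond(x \vee y) = (x \vee y) \circ 1 = (x \circ 1) \vee (y \circ 1) = \Diamond x \vee \Diamond y$, where the middle equality is the instance of \eqref{o dist} (distribution of $\circ$ over joins in the first coordinate) with the second argument fixed as $1$. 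That is the entire argument.

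There is essentially no obstacle here: this lemma is the converse direction of Lemma~\ref{lem:dlo-to-rdl}, and it is even simpler, since fixing the second argument of $\circ$ to the constant $1$ immediately yields both required properties from the corresponding $bdbo$ axioms. The only thing worth a brief remark is that, unlike the operation $x \circ y := \Diamond(x \wedge y)$ of Lemma~\ref{lem:dlo-to-rdl}, the construction here does not in general return us to the original algebra when iterated, and the resulting $\Diamond$ need not bear any relation to a commutative $\circ$; but none of that is needed for the statement, which asserts only that $\mathbb{A}^\Diamond$ is a $bdo$.

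\begin{proof}
The reduct $\langle A, \wedge, \vee, 0, 1, \leqslant \rangle$ is a bounded distributive lattice by hypothesis, so it remains only to verify that $\Diamond$ satisfies $\Diamond\, 0 = 0$ and $\Diamond(x \vee y) = \Diamond x \vee \Diamond y$. For the first, $\Diamond\, 0 = 0 \circ 1 = 0$, using the $bdbo$ identity $0 \circ x = 0$. For the second, using \eqref{o dist},
\[
  \Diamond (x \vee y) = (x \vee y) \circ 1 = (x \circ 1) \vee (y \circ 1) = \Diamond x \vee \Diamond y.
\]
Hence ${\mathbb{A}}^\Diamond$ is a $bdo$.
\end{proof}
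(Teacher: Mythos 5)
Your proof is correct and matches the paper's own argument exactly: both verify $\Diamond\,0 = 0\circ 1 = 0$ from the $bdbo$ identity and $\Diamond(x\vee y) = (x\vee y)\circ 1 = (x\circ 1)\vee(y\circ 1)$ from \eqref{o dist}. Nothing to add.
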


\begin{proof}
Indeed,
$\Diamond (x \vee y) =  (x \vee y) \circ 1 =   (x \circ 1) \vee (y \circ 1)$ $= \Diamond\, x \vee \Diamond\, y$ and $\Diamond\, 0 = 0
\circ 1 = 0$.
\end{proof}

\section{Satisfiability and universal theories} \label{Satisfiability and Universal Theories}

We recall here the notion of satisfiability of a quantifier-free first-order formula in a structure and
its relevance to the universal theory of a class of structures.
We then recall the notions of a partial structure and satisfiability in partial structures, and how this relates to satisfiability in full structures.
For more detailed background on the partial structure approach to satisfiability, we refer the reader to \cite{VA13}. 

The languages we consider in this paper consist of a finite set containing  operation symbols, constant symbols and 
the relation symbol $\leqslant$,
together with an {\em arity} function $\ar$ that assigns to each
operation symbol $\delta$ a natural number $\ar(\delta)$.
If $\sigma$ denotes a language, then we use $\sigma_{\mathrm{op}}$ and $\sigma_{\mathrm{con}}$ to denote the sets of all 
operation and constant symbols, respectively.
The relation symbol $\leqslant$ is assumed to be binary.
We write $\sigma_=$ to denote the language $\sigma$ augmented with the equality symbol.
The set of ($\sigma$-){\em terms} over a set of variables is constructed in the usual way from constants, variables and operation symbols.

Let $\sigma$ be a language of the above type and $\mathcal K$ a class of $\sigma$-structures.
We use \textsf{and}, \textsf{or}, \textsf{not}, $\Rig$ for (meta-)logical connectives.
Let $\varphi(x_1, \dots, x_n)$ be a quantifier-free
first-order formula in the language $\sigma_=$, i.e.,
a Boolean combination of atomic formulas of the form $s = t$ or $s \leq t$, where
$s$ and $t$ are terms.  
Recall that $\varphi(x_1, \dots, x_n)$ is {\em satisfiable} in a structure ${\mathbb{A}}$ of type $\sigma$
if there exists a valuation $v: \{ x_1, \dots, x_n \} \rig A$ 
such that $\vp$ is true in ${\mathbb{A}}$ under $v$.
Then, $\varphi(x_1, \dots, x_n)$ is {\em satisfiable} in $\mathcal K$ if there exists ${\mathbb{A}} \in \mathcal K$
such that $\vp$ is satisfiable in ${\mathbb{A}}$.
We refer to the problem of deciding if a given $\varphi$ is
satisfiable in $\mathcal K$ as {\em satisfiability} in $\mathcal K$.

By a {\em universal sentence} we mean a first-order sentence of the
form \linebreak
$\Phi: = (\forall x_1, \dots, x_n) \varphi(x_1, \dots, x_n)$ where
$\vp$ is a quantifier-free first-order formula \linebreak (in the
language $\sigma_=$).  For such a sentence, we write
$\mathcal K \models \Phi$ to mean that $\vp$ is true in every
${\mathbb{A}} \in \mathcal K$ under every valuation.  By the {\em
  universal theory} of $\mathcal{K}$, we mean the set of all universal
sentences $\Phi$ for which $\mathcal K \models \Phi$ .  Observe that
$\mathcal K \not\models \Phi$ means that there exists
${\mathbb{A}} \in \mathcal K$ and valuation $v$ such that $\vp$ is not
true in ${\mathbb{A}}$ under $v$, i.e., $\mbox{\bf not\,}\varphi$ is
true in ${\mathbb{A}}$ under $v$.  Thus, $\mathcal K \models \Phi$ iff
$\mbox{\bf not\,}\varphi$ is not satisfiable in $\mathcal K$.  The
complexity of the universal theory of $\mathcal K$, therefore, yields
the complexity of satisfiability in $\mathcal K$, and vice versa.

Let $X$ and $Y$ be sets.
By a {\em partial function} from $X$ to $Y$ we mean a function
$\tau: X' \rig Y$ where $X'$ is a subset of $X$ called the {\em domain} of $\tau$
and denoted by $\dom(\tau)$.
For any $a \in X$, we say that $\tau(a)$ is {\em defined} iff $a \in \dom(\tau)$.
Let $A$ be a non-empty set and $n$ a natural number such that $n \geq 1$.
An $n$-{\em ary partial operation} on $A$ is a partial function $\tau$ from
$A^n$ to $A$; $n$ is called the {\em arity} of $\tau$.

Let $\sigma$ be a language of the type described above.
A structure ${\mathbb{B}}$ is called a {\em partial $\sigma$-structure} if it consists of the following: a nonempty set $B$,
an $\ar(\delta)$-ary partial operation $\delta^{{\mathbb{B}}}$ on $B$, 
for each $\delta \in \sigma_{\mathrm{op}}$,
a constant $c^{{\mathbb{B}}} \in B$
for each $c \in \sigma_{\mathrm{con}}$ 
and a binary relation $\leqslant^{{\mathbb{B}}}$ on $B$.

If ${\mathbb{B}}$ is a partial $\sigma$-structure then for every
term $t(x_1,\dots, x_n)$ there is an associated partial function
$t^{{\mathbb{B}}}$ from $B^n$ to $B$.
For $(a_1, \dots, a_n) \in B^n$, $t^{\mathbb{B}}(a_1, \dots, a_n)$ is defined iff 
$t$ can be evaluated in ${\mathbb{B}}$ under the valuation $v(x_i) = a_i$
(that is, if all partial operations required for the valuation of $t$ in ${\mathbb{B}}$ under $v$ are defined).

Let ${\mathbb{B}}$  be a partial $\sigma$-structure, $\varphi(x_1, \dots, x_n)$ a quantifier-free first-order $\sigma_{=}$-formula and
$v: \{ x_1, \dots, x_n \} \rig B$ a valuation.
We say that $\varphi(x_1, \dots, x_n)$ is {\em satisfied} in ${\mathbb{B}}$ under $v$ if
$t^{{\mathbb{B}}}(v(x_1), \dots, v(x_n))$ is defined for every term $t$ occurring in $\varphi$ and $\varphi$
is true in ${\mathbb{B}}$ under $v$.
We say that $\varphi$ is {\em satisfiable} in ${\mathbb{B}}$ iff $\varphi$ is satisfied  in ${\mathbb{B}}$ under some valuation.

Let ${\mathbb{A}}$ be a $\sigma$-structure and ${\mathbb{B}}$ a partial $\sigma$-structure.
We say that ${\mathbb{B}}$ is a {\em partial substructure}
of ${\mathbb{A}}$ if $B \sub A$, 
$\delta^{{\mathbb{B}}}(a_1, \dots, a_{\ar(\delta)}) = \delta^{\mathbb{A}}(a_1, \dots, a_{\ar(\delta)})$
for all $\delta \in \sigma_{\mathrm{op}}$ and $(a_1, \dots, a_{\ar(\delta)}) \in \dom(\delta^{{\mathbb{B}}})$,
$c^{{\mathbb{B}}} = c^{\mathbb{A}}$ for all $c \in \sigma_{\mathrm{con}}$ and
$\leqslant^{{\mathbb{B}}} = \, \leqslant^{\mathbb{A}} \cap \, (B^2)$.

Let $\varphi$ be a quantifier-free first-order $\sigma_{=}$-formula.
Let $|\Op(\vp)|$ be the total number of occurrences of all operation symbols in $\vp$
and let $|\Var(\vp)|$ be the number of distinct variables occurring in $\vp$.
The {\em size} of $\varphi$ is then defined as
$s(\varphi) := |\Op(\vp)| + |\Var(\vp)| + |\sigma_{\mathrm{con}}|$.

\begin{theorem} \cite{VA13}  \label{main theorem}
Let $\mathcal K$ be a class of $\sigma$-structures and 
$\varphi$ a quantifier-free first-order $\sigma_{=}$-formula.
Then $\varphi$ is satisfiable in $\mathcal K$ if, and only if, $\varphi$
is satisfiable in some partial substructure ${\mathbb{B}}$ of some member of $\mathcal K$, with $|B| \leq s(\varphi)$.
\end{theorem}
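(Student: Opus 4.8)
The plan is to prove the two directions separately. The easy direction is right-to-left: suppose $\varphi$ is satisfied in some partial substructure ${\mathbb{B}}$ of some ${\mathbb{A}} \in \mathcal{K}$ under a valuation $v$. Since ${\mathbb{B}}$ is a partial substructure of ${\mathbb{A}}$, every operation and constant of ${\mathbb{B}}$ agrees with that of ${\mathbb{A}}$ wherever defined, and $\leqslant^{{\mathbb{B}}}$ is the restriction of $\leqslant^{{\mathbb{A}}}$. Because all terms occurring in $\varphi$ are defined in ${\mathbb{B}}$ under $v$, a straightforward induction on term structure shows $t^{{\mathbb{B}}}(v(\bar{x})) = t^{{\mathbb{A}}}(v(\bar{x}))$ for every such term. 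Hence every atomic subformula of $\varphi$ has the same truth value in ${\mathbb{B}}$ under $v$ as in ${\mathbb{A}}$ under $v$ (viewing $v$ as a valuation into $A$ via $B \subseteq A$), and so $\varphi$ is true in ${\mathbb{A}}$ under $v$; thus $\varphi$ is satisfiable in $\mathcal{K}$. The cardinality hypothesis $|B| \leq s(\varphi)$ plays no role in this direction.

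For the left-to-right direction, suppose $\varphi(x_1, \dots, x_n)$ is satisfiable in $\mathcal{K}$, say in ${\mathbb{A}} \in \mathcal{K}$ under a valuation $v: \{x_1, \dots, x_n\} \rig A$. The idea is to cut down ${\mathbb{A}}$ to the subset of elements actually ``witnessed'' by the formula. Let $\mathrm{Sub}(\varphi)$ be the set of all terms (subterms of terms) occurring in $\varphi$, and put
\[
  B \;=\; \{\, t^{{\mathbb{A}}}(v(x_1), \dots, v(x_n)) \mid t \in \mathrm{Sub}(\varphi) \,\}.
\]
This is nonempty (at worst it contains $v(x_1)$, and if $n = 0$ one adds the interpretation of some constant, or of a fixed element; one should check $\varphi$ has at least one term, which it does once $|\sigma_{\mathrm{con}}| \geq 1$ or $n \geq 1$ — the definition of $s(\varphi)$ anticipates this). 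Define ${\mathbb{B}}$ to have universe $B$, with $\leqslant^{{\mathbb{B}}} = \leqslant^{{\mathbb{A}}} \cap (B^2)$, with $c^{{\mathbb{B}}} = c^{{\mathbb{A}}}$ for each constant $c$ (noting $c^{{\mathbb{A}}} \in B$ since $c$ is itself a term, hence in $\mathrm{Sub}(\varphi)$ if it occurs, and $|\sigma_{\mathrm{con}}|$ is counted in $s(\varphi)$ so all constants can be thrown into $B$), and with each partial operation $\delta^{{\mathbb{B}}}$ defined on a tuple $(b_1, \dots, b_{\ar(\delta)}) \in B^{\ar(\delta)}$ exactly when that tuple arises from subterms of $\varphi$ whose $\delta$-combination is again a subterm of $\varphi$, taking the value $\delta^{{\mathbb{A}}}(b_1, \dots, b_{\ar(\delta)})$ in that case. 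By construction ${\mathbb{B}}$ is a partial substructure of ${\mathbb{A}}$, and an induction on term structure gives $t^{{\mathbb{B}}}(v(\bar{x})) = t^{{\mathbb{A}}}(v(\bar{x}))$ and, in particular, $t^{{\mathbb{B}}}(v(\bar{x}))$ is defined for every $t \in \mathrm{Sub}(\varphi)$. Therefore every atomic subformula, and hence $\varphi$ itself, has the same truth value in ${\mathbb{B}}$ under $v$ as in ${\mathbb{A}}$, so $\varphi$ is satisfied in ${\mathbb{B}}$.

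It remains to verify the size bound $|B| \leq s(\varphi)$. The key observation is that the terms occurring in $\varphi$ are exactly: the variables (at most $|\Var(\varphi)|$ of them), the constant symbols (at most $|\sigma_{\mathrm{con}}|$ of them, if we include all constants in $B$), and the compound terms, each of which is determined by the outermost operation-symbol occurrence that builds it — so there are at most $|\Op(\varphi)|$ compound terms. Since $B$ is the image under $t \mapsto t^{{\mathbb{A}}}(v(\bar x))$ of this set of terms, we get $|B| \leq |\Var(\varphi)| + |\sigma_{\mathrm{con}}| + |\Op(\varphi)| = s(\varphi)$. I expect the bookkeeping in this counting step — pinning down precisely which terms count and making sure the map from operation-symbol occurrences to compound subterms is onto — to be the only genuinely fiddly part; the rest is a routine term induction. (One must also handle the degenerate possibility that $\varphi$ contains no terms at all, e.g. if it is a tautology over no atoms; but in practice atomic formulas $s = t$, $s \leqslant t$ always supply terms, so $B$ is nonempty.)
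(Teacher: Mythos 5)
The paper gives no proof of this theorem; it is imported verbatim from \cite{VA13}. Your argument --- restricting ${\mathbb{A}}$ to the set $B$ of values of the subterms of $\varphi$ together with all constants, defining the partial operations only on tuples arising from subterms, and bounding the number of distinct compound subterms by $|\Op(\vp)|$ via the surjection from operation-symbol occurrences onto the compound subterms they root --- is precisely the standard subterm-closure proof from that source, and it is correct, including the handling of the constants (which the definition of a partial substructure forces into $B$ and which $s(\varphi)$ accounts for via the $|\sigma_{\mathrm{con}}|$ summand).
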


If we have an algorithm for identifying the partial substructures of members of $\mathcal K$ 
up to a given size amongst the set of all partial $\sigma$-structures,
then, by the above theorem, we have an algorithm for deciding
satisfiability in $\mathcal K$.
In addition, we obtain an upper bound for the complexity of 
satisfiability in $\mathcal K$ if the complexity of
identifying the partial substructures of members of $\mathcal K$ is known.
In that case, we also have an upper bound for
the complexity of the universal theory of $\mathcal K$.

Notice that, conversely, if we can decide satisfiability in $\mathcal K$,
then we can decide if a given partial structure is a substructure of some member of 
$\mathcal K$.
To see this, suppose one can decide satisfiability in $\mathcal K$, and
let ${\mathbb{B}}$ be a partial structure in the language of $\mathcal K$ with domain $B = \{ a_1, \dots, a_n \}$.
Construct a quantifier-free first-order formula $\varphi$ in the language of $\mathcal K$ describing ${\mathbb{B}}$ as follows: 
use $\{ x_1, \dots, x_n \}$ as variables representing the elements of ${\mathbb{B}}$ and set
$\varphi$ as the conjunction of the following literals: 
$x_i \neq x_j$ for $i \neq j$; 
for every $k$-ary function symbol $\delta$ in the language and each $i_1, \dots, i_k$, if $\delta^{\mathbb{B}}(a_{i_1}, \dots, a_{i_k}) = a_j$,
include $\delta(x_{i_1}, \dots, x_{i_k}) = x_j$;
for every $k$-ary relation symbol $R$ in the language and each $i_1, \dots, i_k$,
include $R(x_{i_1}, \dots, x_{i_k})$ if $R^{\mathbb{B}}(a_{i_1}, \dots, a_{i_k})$, and {\bf not}$R(x_{i_1}, \dots, x_{i_k})$, otherwise.
Clearly, $\varphi$ is satisfiable in ${\mathbb{B}}$ under the valuation $v(x_i) = a_i$. 
Moreover, for any ${\mathbb{A}}$ in $\mathcal K$, $\varphi$ is satisfiable in ${\mathbb{A}}$ if, and only if, 
${\mathbb{B}}$ is a partial substructure of ${\mathbb{A}}$.
Thus, $\varphi$ is satisfiable in $\mathcal K$ if, and only if, 
${\mathbb{B}}$ is a partial substructure of some member of $\mathcal K$.

\section{Partial $brdg$'s}
\label{sec:partial-DRL}

We now provide a characterization of partial $brdg$'s.
We start by giving characterizations of partial bounded (distributive) lattices, as obtained in \cite{VA13}.
Let $\sigma^{b\ell}$ be the language containing two binary operation symbols $\wedge$ and $\vee$, two constant symbols 0 and 1, 
and a binary relation symbol $\leqslant$.
Let ${\mathbb{B}} = \lb B, \wedge^{\mathbb{B}}, \vee^{\mathbb{B}}, 0^{\mathbb{B}}, 1^{\mathbb{B}}, \leqslant^{\mathbb{B}} \rb$
be a partial $\sigma^{b\ell}$-structure.
Then, ${\mathbb{B}}$ is a partial substructure of a bounded lattice, i.e., a {\em partial bounded lattice}, 
if $\leqslant^{\mathbb{B}}$ is a partial order on $B$ with
 $0^{\mathbb{B}}$ and $1^{\mathbb{B}}$ as {\em bounds}, that is, 
 they are the least and greatest elements of $B$ with respect to $\leqslant^{\mathbb{B}}$,
 and the partial operations $\wedge^{{\mathbb{B}}}$ and $\vee^{{\mathbb{B}}}$ 
are {\em compatible} with $\leqslant^{{\mathbb{B}}}$ in the following sense: for all $a, b \in B$,
\begin{itemize}
\item[$\bullet$] if $a \wedge^{\mathbb{B}} b$ is defined, then $a \wedge^{{\mathbb{B}}} b$ is the greatest lower bound of $\{ a,b \}$
with respect to $\leqslant^{\mathbb{B}}$,
\item[$\bullet$] if $a \vee^{\mathbb{B}} b$ is defined, then $a \vee^{{\mathbb{B}}} b$ is the least upper bound of $\{ a, b \}$
with respect to $\leqslant^{\mathbb{B}}$.
\end{itemize}
We may characterize partial bounded distributive lattices as follows.
Let ${\mathbb{B}}$ be a partial bounded lattice.
By a {\em prime filter} of ${\mathbb{B}}$ we mean a subset $f$ of $B$ such that, for all $a, b \in B$,
  \begin{align}
  & \mbox{$1^{\mathbb{B}} \in f$ and $0^{\mathbb{B}} \notin f$;}  \label{PF1} \\
  & \mbox{if $a \in f$ and $a \leqslant^{\mathbb{B}} b$, then $b \in f$;}  \label{PF2} \\
  & \mbox{if $a \in f$ and $b \in f$ and $a \wedge^{\mathbb{B}} b$
    is defined, then $a \wedge^{\mathbb{B}} b \in f$;}  \label{PF3}  \\
  & \mbox{if $a \notin f$ and $b \notin f$ and
    $a \vee^{\mathbb{B}} b$ is defined, then $a \vee^{\mathbb{B}} b \notin f$.}  \label{PF4}
  \end{align}
By \cite{VA13}, a partial $\sigma^{b\ell}$-structure ${\mathbb{B}}$ is a partial substructure of a bounded
distributive lattice,
i.e., {\em partial bounded distributive lattice}, if it is a partial bounded lattice and 
there exists a set $\mathcal F$ of prime filters of ${\mathbb{B}}$ such that the following holds:
\begin{itemize}
  \item[(D)] $(\forall\, a, b \in B) [ a \not\leqslant^{\mathbb{B}} b \Rig
    (\exists\, f \in \mathcal{F} ) ( a \in f \;\mbox{\bf and}\; b \notin f )]$.
\end{itemize}

We use $\sig^{brdg}$ to denote the language of $brdg$'s, that is $\sig^{brdg}$
contains binary operation symbols $\wedge$, $\vee$, $\circ$, $\lres$ and $\rres$,
constant symbols 0 and 1 and a binary relation symbol  $\leqslant$. 
By a {\em partial $brdg$} we shall mean a partial $\sig^{brdg}$-structure
that is a partial substructure of a $brdg$.

In the following theorem we characterize partial $brdg$'s.
Clearly, a partial $brdg$ must contain a partial bounded distributive lattice; as we show, the
set $\mathcal F$ of prime filters required for (D) must also form the basis for a $brdg$-frame. 

\begin{theorem}
  \label{partial dlrg thm}
Let ${\mathbb{B}} = \lb B, \wedge^{\mathbb{B}}, \vee^{\mathbb{B}}, \circ^{\mathbb{B}}, \lres^{\mathbb{B}}, \rres^{\mathbb{B}}, 
0^{\mathbb{B}}, 1^{\mathbb{B}}, \leqslant^{\mathbb{B}} \rb$
be a partial $\sig^{brdg}$-structure.  
Then, ${\mathbb{B}}$ is a partial $brdg$ if, and only if, 
its $\sigma^{b\ell}$-reduct is a partial bounded lattice
and there exists a set $\mathcal{F}$ of prime
filters of ${\mathbb{B}}$ such that {\em (D)} holds, as well as
\begin{flalign*}
\mbox{\em (M$_{\circ}$)} \quad (\forall\, h \in \mathcal{F}) (\forall\, (a,b) \in & \dom(\circ^{\mathbb{B}})) [a \circ ^{\mathbb{B}} b \in h \Rig & \\
    & (\exists f, g \in \mathcal{F}) (a \in f \;\mbox{\bf and}\; b \in g  \;\mbox{\bf and}\; R^{\mathbb{B}} (f,g,h))], &
\end{flalign*}
\begin{flalign*}
\mbox{\em (M$_{\lres}$)} \quad (\forall\, g \in \mathcal{F}) (\forall\, (a,b) \in & \dom( \lres^{\mathbb{B}})) [ a \lres^{\mathbb{B}} b \notin g \Rig & \\
    & (\exists\, f, h \in \mathcal{F}) (a \in f \;\mbox{\bf and}\; b \notin h  \;\mbox{\bf and}\; R^{\mathbb{B}} (f,g,h))], &
\end{flalign*}
\begin{flalign*}
\mbox{\em (M$_{\rres}$)} \quad (\forall\, f \in \mathcal{F}) (\forall\, (a,b) \in & \dom( \rres^{\mathbb{B}})) [ a \rres^{\mathbb{B}} b \notin f \Rig & \\
    & (\exists\, g, h \in \mathcal{F}) (a \in g \;\mbox{\bf and}\; b \notin h  \;\mbox{\bf and}\; R^{\mathbb{B}} (f,g,h))], & 
\end{flalign*}
where
\[
   \begin{array}{rcl}
     R^{\mathbb{B}} ( f, g, h ) &  \leftrightharpoons & (\forall\, (a,b) \in \dom(\circ^{\mathbb{B}}))
    ( a \in f \;\mbox{\bf and}\; b \in g \Rig a \circ^{\mathbb{B}} b \in h) \;\mbox{\bf and}\; \\
     & & (\forall\, (a,b) \in \dom(\lres^{\mathbb{B}} )) ( a \in f \;\mbox{\bf and}\; a \lres^{\mathbb{B}} b \in
            g \Rig b \in h) \;\mbox{\bf and}\; \\
     & & (\forall\, (a,b) \in \dom(\rres^{\mathbb{B}} )) ( b \rres^{\mathbb{B}} a \in f \;\mbox{\bf and}\; a  \in
            g \Rig b \in h).
   \end{array}
\]
\end{theorem}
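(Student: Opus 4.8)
The plan is to prove both directions of the biconditional by exploiting the representation theory already developed in Section~\ref{sec:rdl}: the embedding $\mu \colon {\mathbb{A}} \to {\mathbb{A}}_{\mathfrak{F}_{\mathbb{A}}}$ of a full $brdg$ into the complex algebra of its prime-filter frame, together with Lemmas~\ref{cl:pastbox-rel} and~\ref{cl:Jonsson-Tarski-fusion}.

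For the forward direction, suppose ${\mathbb{B}}$ is a partial substructure of some $brdg$ ${\mathbb{A}}$. The $\sigma^{b\ell}$-reduct of ${\mathbb{B}}$ is then a partial substructure of a bounded distributive lattice, hence a partial bounded lattice; moreover, by the cited characterization of partial bounded distributive lattices, there is a set $\mathcal{F}_0$ of prime filters of ${\mathbb{B}}$ witnessing (D). The natural candidate for $\mathcal{F}$ is the collection of sets $P \cap B$ as $P$ ranges over the prime filters of ${\mathbb{A}}$ that arise in the representation; more directly, I would take $\mathcal{F} = \{\, P \cap B : P \text{ a prime filter of } {\mathbb{A}}\,\}$ (discarding those that fail to be prime filters of ${\mathbb{B}}$ — but each $P \cap B$ is readily checked to satisfy \eqref{PF1}--\eqref{PF4} since $\leqslant^{\mathbb{B}}$, $\wedge^{\mathbb{B}}$, $\vee^{\mathbb{B}}$ are restrictions of the full operations). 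One verifies (D) using the fact that $a \not\leqslant^{\mathbb{B}} b$ implies $a \not\leqslant^{\mathbb{A}} b$, so some prime filter $P$ of ${\mathbb{A}}$ separates them, and then $P \cap B$ does the job in ${\mathbb{B}}$. The key point is that $R^{\mathbb{B}}(P \cap B, Q \cap B, S \cap B)$ follows from $\mathcal{R}(P,Q,S)$: this is immediate for the $\circ$-clause, and for the $\lres$- and $\rres$-clauses it is exactly the content of Lemma~\ref{cl:pastbox-rel}. Then (M$_\circ$), (M$_\lres$), (M$_\rres$) follow from the corresponding clauses of Lemma~\ref{cl:Jonsson-Tarski-fusion}: if $(a,b) \in \dom(\circ^{\mathbb{B}})$ and $a \circ^{\mathbb{B}} b = a \circ^{\mathbb{A}} b \in h = P \cap B$, apply Lemma~\ref{cl:Jonsson-Tarski-fusion}(i) in ${\mathbb{A}}$ to obtain prime filters $F, G$ of ${\mathbb{A}}$ with $a \in F$, $b \in G$, $\mathcal{R}(F,G,P)$, and set $f = F \cap B$, $g = G \cap B$; then $R^{\mathbb{B}}(f,g,h)$ as just observed. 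The cases (M$_\lres$) and (M$_\rres$) are entirely analogous, using parts (ii) and (iii) of Lemma~\ref{cl:Jonsson-Tarski-fusion}.

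For the converse — which I expect to be the main obstacle — suppose ${\mathbb{B}}$ is a partial bounded lattice with a set $\mathcal{F}$ of prime filters satisfying (D), (M$_\circ$), (M$_\lres$), (M$_\rres$). Form the relational structure $\mathfrak{F}_{\mathbb{B}} = \langle \mathcal{F}, \subseteq, R^{\mathbb{B}} \rangle$. One first checks that $\mathfrak{F}_{\mathbb{B}}$ is a $brdg$-frame: $\subseteq$ is a partial order, and the monotonicity conditions \eqref{FR1}--\eqref{FR3} hold because $R^{\mathbb{B}}$ is manifestly downward-monotone in its first two arguments and upward-monotone in its third (each defining clause is a universally quantified implication whose hypothesis shrinks, resp.\ conclusion grows, under these inclusions). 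Then form the full $brdg$ ${\mathbb{A}} := {\mathbb{A}}_{\mathfrak{F}_{\mathbb{B}}}$ on $\mathfrak{U}(\mathcal{F})$, and define $\mu \colon B \to \mathfrak{U}(\mathcal{F})$ by $\mu(a) = \{\, f \in \mathcal{F} : a \in f\,\}$; each $\mu(a)$ is upclosed by \eqref{PF2}. The claim is that $\mu$ embeds ${\mathbb{B}}$ into ${\mathbb{A}}$ as a partial substructure. That $\mu$ is injective and preserves $\leqslant$, $0$, $1$, $\wedge$, $\vee$ (where defined) follows from (D) and \eqref{PF1}--\eqref{PF4} exactly as in the distributive-lattice case of \cite{VA13}. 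The crux is to show $\mu$ preserves $\circ^{\mathbb{B}}$, $\lres^{\mathbb{B}}$, $\rres^{\mathbb{B}}$ where defined; i.e., for $(a,b) \in \dom(\circ^{\mathbb{B}})$, $\mu(a \circ^{\mathbb{B}} b) = \mu(a) \circ \mu(b)$ in ${\mathbb{A}}$, and similarly for the residuals.

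For this crucial step I would argue by double inclusion on each of the three operations. Take $\circ$: one containment, $\mu(a) \circ \mu(b) \subseteq \mu(a \circ^{\mathbb{B}} b)$, unwinds the definition of $\circ$ in the complex algebra — if $h \in \mu(a) \circ \mu(b)$ there are $f \in \mu(a)$, $g \in \mu(b)$ with $R^{\mathbb{B}}(f,g,h)$, whence $a \in f$, $b \in g$, and the $\circ$-clause of $R^{\mathbb{B}}$ gives $a \circ^{\mathbb{B}} b \in h$, i.e.\ $h \in \mu(a \circ^{\mathbb{B}} b)$ — and uses no axiom beyond the definition of $R^{\mathbb{B}}$. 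The reverse containment is precisely where (M$_\circ$) is needed: if $h \in \mu(a \circ^{\mathbb{B}} b)$, then $a \circ^{\mathbb{B}} b \in h$ with $(a,b) \in \dom(\circ^{\mathbb{B}})$, so (M$_\circ$) supplies $f, g \in \mathcal{F}$ with $a \in f$, $b \in g$, $R^{\mathbb{B}}(f,g,h)$, giving $h \in \mu(a) \circ \mu(b)$. For $\lres$: $\mu(a) \lres \mu(b) \subseteq \mu(a \lres^{\mathbb{B}} b)$ is the content of the contrapositive of (M$_\lres$) — if $g \notin \mu(a \lres^{\mathbb{B}} b)$, i.e.\ $a \lres^{\mathbb{B}} b \notin g$, then (M$_\lres$) yields $f, h$ with $a \in f$, $b \notin h$, $R^{\mathbb{B}}(f,g,h)$, so $g \notin \mu(a) \lres \mu(b)$ by the definition of the residual in ${\mathbb{A}}$; the reverse containment uses the $\lres$-clause of $R^{\mathbb{B}}$ directly. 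The $\rres$ case is symmetric via (M$_\rres$) and the $\rres$-clause. Finally, having an embedding of ${\mathbb{B}}$ into the full $brdg$ ${\mathbb{A}}$ that is defined on all of $B$ and agrees with ${\mathbb{A}}$'s operations wherever ${\mathbb{B}}$'s are defined, we conclude that ${\mathbb{B}}$ (identified with its $\mu$-image) is a partial substructure of ${\mathbb{A}}$, hence a partial $brdg$. The bookkeeping to confirm that the three $R^{\mathbb{B}}$-clauses interact correctly with the definitions of $\circ$, $\lres$, $\rres$ on $\mathfrak{U}(\mathcal{F})$ — in particular, checking the quantifier alternations line up so that each of (M$_\circ$), (M$_\lres$), (M$_\rres$) is exactly strong enough, and no stronger, for the containment it handles — is the only genuinely delicate part.
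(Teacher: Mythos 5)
Your proposal is correct and follows essentially the same route as the paper's own proof: the forward direction takes $\mathcal{F}=\{F\cap B : F \text{ a prime filter of } {\mathbb{A}}\}$, establishes that $\mathcal{R}(F,G,H)$ implies $R^{\mathbb{B}}(F\cap B, G\cap B, H\cap B)$ (the paper derives this directly from \eqref{dlrg 1} rather than citing Lemma~\ref{cl:pastbox-rel}, but the content is the same), and obtains the (M)-conditions from Lemma~\ref{cl:Jonsson-Tarski-fusion}; the converse builds the frame $\langle \mathcal{F},\subseteq,R^{\mathbb{B}}\rangle$, passes to its complex algebra, and verifies that $\mu(a)=\{f\in\mathcal{F} : a\in f\}$ is an embedding via exactly the double inclusions you describe. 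Your write-up is in fact slightly more explicit than the paper's on the frame conditions \eqref{FR1}--\eqref{FR3} and on which containment uses which hypothesis, but there is no substantive difference.
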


\begin{proof}
Suppose ${\mathbb{B}}$ is a partial $brdg$; say ${\mathbb{B}}$ is a partial substructure of a
$brdg$ ${\mathbb{A}}$.
Since $\leqslant^{\mathbb{B}} \, = \, \leqslant^{\mathbb{A}} \cap \, (B^2)$, it follows that $\leqslant^{\mathbb{B}}$ is a partial order
with $0^{\mathbb{B}}$ and $1^{\mathbb{B}}$ as bounds, and
that $\wedge^{\mathbb{B}}$ and $\vee^{\mathbb{B}}$ are compatible with $\leqslant^{\mathbb{B}}$.
Thus, the $\sigma^{b\ell}$-reduct of ${\mathbb{B}}$ is a partial bounded lattice.
We show that there exists a set $\mathcal F$ of prime
filters of ${\mathbb{B}}$ that satisfies (D), (M$_\circ$), (M$_\lres$) and (M$_\rres$).  
Let
\[
  \mathcal{F} := \{ F \cap B \mid F \mbox{ is a prime filter of } {\mathbb{A}} \}.
\]
Note that each $F \cap B \in \mathcal F$ is a prime filter of ${\mathbb{B}}$ since $a \wedge^{\mathbb{B}} b = a \wedge^{\mathbb{A}} b$
and $a \vee^{\mathbb{B}} b = a \vee^{\mathbb{A}} b$ whenever $a \wedge^{\mathbb{B}} b$ and $a \vee^{\mathbb{B}} b$ are defined.
To verify (D), let $a, b \in B$ and assume that $a \not\leqslant^{\mathbb{B}} b$, hence
also $a \not\leqslant^{\mathbb{A}} b$.  
By properties of distributive lattices,
there exists a prime filter $F$ of ${\mathbb{A}}$ such that $a \in F$
and $b \notin F$.  Then, $f := F \cap B \in \mathcal F$ is a witness to the
satisfaction of (D).
  
To verify (M$_\circ$), (M$_\lres$) and (M$_\rres$), we first make the following observation.
Recall that $\mathfrak{F}_{\mathbb{A}} = \langle \mathcal P, \sub, \mathcal R \rangle$, where $\mathcal P$ is the set of
prime filters of ${\mathbb{A}}$ and $\mathcal R$ is
defined as in \eqref{R def}, is the $brdg$-frame associated with ${\mathbb{A}}$.
For $F, G, H \in \mathcal P$, let $f:= F \cap B$, $g := G \cap B$ and $h := H \cap B$.
Then,
\begin{equation}   \label{R and R}
    \mbox{if $\mathcal R(F,G,H)$, then $R^{\mathbb{B}}(f, g, h)$}.
\end{equation}
To see this, let $\mathcal R(F,G,H)$ and suppose, first, 
that $(c,d) \in \dom(\circ^{\mathbb{B}})$,  $c \in f$ and $d \in g$.
Then $c \in F$ and $d \in G$, so $c \circ^{\mathbb{A}} d \in H$.
As ${\mathbb{B}}$ is a partial substructure of
${\mathbb{A}}$,  $c \circ^{\mathbb{B}} d = c \circ^{\mathbb{A}} d \in H$.
Also, $c \circ^{\mathbb{B}} d \in B$,
hence $c \circ^{\mathbb{B}} d \in h$.
Next, suppose that $(c, d) \in \dom(\lres^{\mathbb{B}})$,  $c \in f$ and $c \lres^{\mathbb{B}} d \in g$.
Since $c \lres^{\mathbb{A}} d = c \lres^{\mathbb{B}} d$, we have $c \lres^{\mathbb{A}} d \in G$; also $c \in F$, so
$c \circ^{\mathbb{A}} (c \lres^{\mathbb{A}} d) \in H$.
By \eqref{dlrg 1}, therefore, we have $d \in H$, hence $d \in h$, as required.
The third clause of the definition of $R^{\mathbb{B}}$ follows similarly.

To see that (M$_{\circ}$) holds, assume that $h \in \mathcal{F}$, $(a, b) \in \dom( \circ^{\mathbb{B}})$ and
$a \circ^{\mathbb{B}} b \in h$.  
Then $h = H \cap B$, for some prime filter $H$ of ${\mathbb{A}}$ and
$a \circ^{\mathbb{A}} b = a \circ^{\mathbb{B}} b \in H$. 
In view of Lemma~\ref{cl:Jonsson-Tarski-fusion}(i), there exist prime filters $F$
and $G$ of ${\mathbb{A}}$ such that $a \in F$, $b \in G$, and $\mathcal R(F,G,H)$.
Thus, for $f := F \cap B$ and $g := G \cap B$ we have $a \in f$, $b \in g$ and $R^{\mathbb{B}}(f,g,h)$, by \eqref{R and R}.
That (M$_{\lres}$) and (M$_{\rres}$) hold follows in a similar way from
Lemma~\ref{cl:Jonsson-Tarski-fusion}(ii) and (iii), respectively.

Next, assume that ${\mathbb{B}}$ is a partial $\sig^{brdg}$-structure satisfying the requirements of the theorem.
We construct a $brdg$ into which ${\mathbb{B}}$ can be embedded, thereby showing that
${\mathbb{B}}$ is isomorphic to a substructure of a $brdg$.
Observe that the structure
$\mathfrak{F} = \langle \mathcal{F}, \subseteq, R^{\mathbb{B}} \rangle$ satisfies (\ref{FR1}--\ref{FR3}), hence it is a $brdg$-frame.
Let ${\mathbb{A}}_\mathfrak{F} = \langle \mathfrak{U}(\mathcal{F}), \cap, \cup, \circ, \lres, \rres,
\varnothing, \mathcal{F}, \sub \rangle$ be the $brdg$ associated with this $brdg$-frame.
For each $a \in B$, let $\mu( a ) = \{ f \in \mathcal{F} \mid a \in f \}$.
We show that $\mu$ is an embedding of ${\mathbb{B}}$ into
${\mathbb{A}}_\mathfrak{F}$.
For $a, b \in B$, if $a \leqslant^{\mathbb{B}} b$, then $\mu(a) \sub \mu(b)$, and if
$a \not\leqslant^{\mathbb{B}} b$, then, by (D), there exists $f \in \mathcal F$ with $a \in f$ and $b \notin f$,
so $\mu(a) \not\sub \mu(b)$.
This shows that $a \leqslant^{\mathbb{B}} b$ iff $\mu(a) \sub \mu(b)$, and also
that $\mu$ is one-to-one.
In addition, by (\ref{PF1}--\ref{PF4}), $\mu(0^{\mathbb{B}}) = \varnothing$, $\mu(1^{\mathbb{B}}) = \mathcal F$,
$\mu(a \wedge^{\mathbb{B}} b) = \mu(a) \cap \mu(b)$ for all $(a,b) \in \dom(\wedge^{\mathbb{B}})$, and
$\mu(a \vee^{\mathbb{B}} b) = \mu(a) \cup \mu(b)$ for all $(a,b) \in \dom(\vee^{\mathbb{B}})$.
For  $(a, b) \in \dom( \circ^{\mathbb{B}})$, we have $\mu( a \circ^{\mathbb{B}} b ) = \{ h \in \mathcal{F} \mid a \circ^{\mathbb{B}} b \in h \}$
and
\[
    \mu(a ) \circ \mu(b ) =  \{ h \in \mathcal{F} \mid (\exists\, f, g \in \mathcal{F} )\,
                                       (a \in f \;\mbox{\bf and}\; b \in g \;\mbox{\bf and}\; R^{\mathbb{B}} (f, g, h))  \}.
\]
That $\mu( a \circ^{\mathbb{B}} b ) = \mu(a ) \circ \mu(b)$ follows
directly from (M$_{\circ}$) and the definition of $R^{\mathbb{B}}$.
That $\mu(a \lres^{\mathbb{B}} b ) = \mu(a ) \lres \mu(b )$ for
$(a,b) \in \dom(\lres^{\mathbb{B}})$ and
$\mu(a \rres^{\mathbb{B}} b ) = \mu(a ) \rres \mu(b )$ for \linebreak
$(a,b) \in \dom(\rres^{\mathbb{B}})$ follows easily from
(M$_{\lres}$), (M$_{\rres}$) and the definition of $R^{\mathbb{B}}$.
\end{proof}

\section{Complexity of satisfiability in $BRDG$: Upper bound}
\label{sec:dol-complexity}

We now prove that satisfiability in $BRDG$ is in \textsf{EXPTIME}.
We describe an algorithm that, given a quantifier-free first-order $\sig^{brdg}_=$-formula
$\vp$, determines whether there exists a $brdg$ ${\mathbb{A}}$ and a
valuation $v$ such that $\vp$ is true in ${\mathbb{A}}$ under $v$, that is, if $\vp$ is satisfiable in $BRDG$.
In view of Theorem~\ref{main theorem}, $\vp$ is true in some
$brdg$ under some valuation if, and only if,
$\vp$ is true in a partial $brdg$ whose cardinality does not exceed
$s(\vp) = |\Var( \vp )| + |\Op( \vp )| + 2$, under some valuation.
The algorithm described below establishes
if such a partial $brdg$ and valuation exist for $\vp$.

We start by describing an algorithm for determining if a given (finite)
partial $\sig^{brdg}$-structure is a partial $brdg$.
Let ${\mathbb{B}}$ be a partial $\sig^{brdg}$-structure with finite $B$.
To check whether ${\mathbb{B}}$ is a partial $brdg$, using Theorem~\ref{partial dlrg thm},
we carry out the following steps:

\begin{enumerate}
\item Check that $\leqslant^{\mathbb{B}}$ is a partial order on $B$ with bounds $0^{\mathbb{B}}$ and $1^{\mathbb{B}}$
and that $\wedge^{\mathbb{B}}$ and $\vee^{\mathbb{B}}$ are
  compatible with $\leqslant^{\mathbb{B}}$. 
\item Check whether there exists a set $\mathcal{F}$ of prime filters
  of $\mathbb{B}$ that satisfies conditions (M$_{\circ}$), (M$_{\lres}$), (M$_{\rres}$) and (D).
\end{enumerate}
Step (1) can be done in time polynomial in $|B|$; if the conditions hold we proceed to step (2), otherwise the algorithm terminates
with the negative answer.
We now describe how step (2) can be carried out in time
exponential in $|B|$.
First, generate the set $\mathcal F_0$ of all prime filters of ${\mathbb{B}}$.  To
that end, we check, for every $S \subseteq B$, if it satisfies
conditions (\ref{PF1}--\ref{PF4}).  
As checking these conditions for a given $S \subseteq B$ can be
done in time polynomial in $|B|$, this step takes time 
$\mathcal{O} (r(|B|) \times 2^{|B|})$ for some polynomial $r$.

Next, for each $f \in \mathcal F_0$, we check that the following three properties hold:
\begin{align*}
 & (\forall (a,b) \in \dom(\circ^{\mathbb{B}}))[a \circ^{\mathbb{B}} b \in f \Rig  \\
 & \quad\quad\quad\quad\quad (\exists f', f'' \in \mathcal F_0)(a \in f' \;\mbox{\bf and}\; b \in f'' \;\mbox{\bf and}\; R^{\mathbb{B}}(f', f'', f))],
 \end{align*}
 \begin{align*}
 & (\forall (a,b) \in \dom(\lres^{\mathbb{B}}))[a \lres^{\mathbb{B}} b \notin f \Rig \\
 & \quad\quad\quad\quad\quad (\exists f', f'' \in \mathcal F_0)(a \in f' \;\mbox{\bf and}\; b \notin f'' \;\mbox{\bf and}\; R^{\mathbb{B}}(f', f, f''))],
 \end{align*}
 \begin{align*}
 & (\forall (a,b) \in \dom(\rres^{\mathbb{B}}))[a \rres^{\mathbb{B}} b \notin f \Rig \\
 & \quad\quad\quad\quad\quad (\exists f', f'' \in \mathcal F_0)(a \in f' \;\mbox{\bf and}\; b \notin f'' \;\mbox{\bf and}\; R^{\mathbb{B}}(f, f', f''))],
\end{align*}
where $R^{\mathbb{B}}$ is as defined in Theorem~\ref{partial dlrg thm}.
Notice that checking $R^{\mathbb{B}}(f,g,h)$ for prime filters $f,g,h$, is polynomial
in $|B|$ and, thus, checking whether the above properties hold, for a given
$f$, can be done in time $\mathcal{O} (q(|B|) \times 2^{2|B|})$, for some polynomial $q$.
If any of the above properties fails for $f$, that is, no suitable $f'$, $f''$ are found, we eliminate $f$ from $\mathcal F_0$.
Otherwise, we retain $f$ in $\mathcal F_0$
and repeat the process with the next element of $\mathcal F_0$.

Having, in the above way, traversed the entire $\mathcal F_0$,
we obtain the resultant set $\mathcal F_1$ of prime filters. 
If we had eliminated at least one $f \in \mathcal F_0$, but not all of them, 
the above procedure is then repeated on $\mathcal F_1$.
Traversing the set $\mathcal F_i$ to obtain the set $\mathcal F_{i+1}$ in this way
is repeatedly carried out until either we obtain the empty set or no
eliminations have been done on the latest pass, resulting in the
non-empty set $\mathcal F$ of prime filters of ${\mathbb{B}}$.
At each traversal, we go through 
a list of length at most $2^{|B|}$, and the entire number of
traversals does not exceed $2^{|B|}$, as at each one (except possibly the last) we
eliminate at least one element of the remaining list.
Therefore, this procedure
can be carried out in time $\mathcal{O} (q(|B|) \times 2^{ 4|B|})$, which is $\mathcal{O} (2^{ 5|B|})$.

If the empty set has been produced, the algorithm terminates
with the negative answer.
Otherwise, the set $\mathcal F$ satisfies the properties (M$_{\circ}$), (M$_{\lres}$) and (M$_{\rres}$),
and this is the largest set of prime filters of ${\mathbb{B}}$ that satisfies these properties
(notice that the ordering of the prime filters in $\mathcal F_0$ does not affect the outcome).
For such an $\mathcal F$, we proceed to check if it satisfies (D).
Note that if (D) does not hold for $\mathcal F$, then it does not hold for any subset of $\mathcal F$.
If (D) holds, then ${\mathbb{B}}$ is a partial $brdg$; if not, the negative answer is returned.
Checking (D) can be done in time $\mathcal{O} (|B|^2 \times 2^{ |B|})$,
hence checking whether ${\mathbb{B}}$ is a partial $brdg$ can be done in time
$\mathcal{O} (2^{ 5|B|})$.

We next estimate how many candidate structures ${\mathbb{B}}$ we need to check.  
As noticed above, if $\vp$ is true in a $brdg$
under some valuation, it is true in a partial $brdg$ with
cardinality not greater than $s(\vp)$.
Thus, we need to check all structures ${\mathbb{B}}$ with $|B| \leqslant s(\vp)$
and all valuations in such structures.
Each such structure can be encoded using matrices corresponding to the operations and the partial order,
each of which has size $\mathcal{O} (s(\vp)^2)$.
Each entry in the matrices can
take on $\mathcal{O} (s(\vp))$ values (which includes an `undefined' value).
Thus, the total number of structures to be checked is $\mathcal{O} ((k_1s(\vp))^{k_2s(\vp)^2})$, which
is $\mathcal{O} (2^{ks(\vp)^3})$ (where $k_1, k_2, k$ are positive constants).
Checking if $\varphi$ holds in a structure ${\mathbb{B}}$ under a given valuation
can be done in time $\mathcal O(s(\varphi))$.
Checking if $\varphi$ holds in ${\mathbb{B}}$ under some valuation
requires considering all valuations; this can be done in time $\mathcal O(s(\vp) \times |B|^{|\Var(\varphi)|})$, which is $\mathcal{O} (2^{s(\vp)^3})$.

Thus, the algorithm runs in time $\mathcal{O} (2^{ks(\vp)^3} \times
2^{ 5 s(\vp)} \times 2^{s(\vp)^3})$, that is, in time exponential in $s(\vp)$.
This establishes the following result.

\begin{theorem}
  \label{cl:BDOs-upper-bound}
Satisfiability in $BRDG$ is in {\em \textsf{EXPTIME}}.
\end{theorem}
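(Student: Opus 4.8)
The plan is to combine Theorem~\ref{main theorem} with the characterization of partial $brdg$'s in Theorem~\ref{partial dlrg thm}. By Theorem~\ref{main theorem}, a quantifier-free $\sigma^{brdg}_=$-formula $\varphi$ is satisfiable in $BRDG$ if and only if it is satisfied, under some valuation, in some partial $brdg$ $\mathbb B$ with $|B|\le s(\varphi)$; since $s(\varphi)$ is linear in the length of $\varphi$, it suffices to enumerate all partial $\sigma^{brdg}$-structures of cardinality at most $s(\varphi)$, test each for being a partial $brdg$, and on the survivors test all valuations for satisfaction of $\varphi$, and then to show the total running time is exponential in $s(\varphi)$.

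The central subroutine decides, for a finite partial $\sigma^{brdg}$-structure $\mathbb B$, whether it is a partial $brdg$. One first checks in polynomial time that the $\sigma^{b\ell}$-reduct is a partial bounded lattice. By Theorem~\ref{partial dlrg thm} it then remains to decide whether there is a set $\mathcal F$ of prime filters of $\mathbb B$ satisfying (D), (M$_\circ$), (M$_\lres$) and (M$_\rres$). The key point is that the relation $R^{\mathbb B}$ depends only on $\mathbb B$, not on $\mathcal F$, so the operator $\Gamma$ sending a set $\mathcal F$ of prime filters to the set of those $h\in\mathcal F$ whose obligations under (M$_\circ$), (M$_\lres$), (M$_\rres$) are witnessed inside $\mathcal F$ is monotone on the finite lattice of sets of prime filters of $\mathbb B$; a set satisfies (M$_\circ$)--(M$_\rres$) exactly when it is a fixed point of $\Gamma$, and the greatest such set is obtained by iterating $\Gamma$ downward from $\mathcal F_0$, the set of all prime filters of $\mathbb B$. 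Concretely: generate $\mathcal F_0$ by testing (\ref{PF1}--\ref{PF4}) on every $S\subseteq B$ (time $\mathcal O(r(|B|)\cdot 2^{|B|})$), then repeatedly scan the current set and delete every filter lacking a witness for the relevant existential clause; each scan has length $\le 2^{|B|}$, each witness search costs $\mathcal O(q(|B|)\cdot 2^{2|B|})$, and at most $2^{|B|}$ scans are needed, giving $\mathcal O(2^{5|B|})$. Finally, since (D) is downward-monotone — if it fails for $\mathcal F$, it fails for every subset of $\mathcal F$ — it suffices to test (D), in time $\mathcal O(|B|^2\cdot 2^{|B|})$, against the maximal surviving set; $\mathbb B$ is a partial $brdg$ iff that set is non-empty and (D) holds. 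So the subroutine runs in $\mathcal O(2^{5|B|})$.

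For the outer loop, a partial $\sigma^{brdg}$-structure on a fixed set of size at most $s(\varphi)$ is given by its partial order and five binary partial operations, i.e.\ by $\mathcal O(s(\varphi)^2)$ table entries each ranging over $\mathcal O(s(\varphi))$ values (including ``undefined''), so there are $\mathcal O(2^{k s(\varphi)^3})$ candidates. For each one that is a partial $brdg$, satisfaction of $\varphi$ under a fixed valuation is checked in time $\mathcal O(s(\varphi))$ (evaluating terms, respecting partiality, then the Boolean combination), and there are at most $|B|^{|\Var(\varphi)|}$ valuations, adding another $\mathcal O(2^{s(\varphi)^3})$ factor. Multiplying, the whole procedure runs in $\mathcal O(2^{k s(\varphi)^3}\cdot 2^{5 s(\varphi)}\cdot 2^{s(\varphi)^3})=2^{\mathcal O(s(\varphi)^3)}$ steps, exponential in the size of $\varphi$; hence satisfiability in $BRDG$ is in \textsf{EXPTIME}.

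I expect the main obstacle to be establishing the correctness of the greatest-fixed-point elimination: one must verify that the deletion order is immaterial, that the surviving set really is the largest set of prime filters of $\mathbb B$ simultaneously satisfying (M$_\circ$), (M$_\lres$) and (M$_\rres$), and hence — using downward-monotonicity of (D) — that checking (D) only on this one maximal set is sound and complete for the existence of a witnessing family $\mathcal F$ in Theorem~\ref{partial dlrg thm}. This hinges precisely on $R^{\mathbb B}$ being independent of $\mathcal F$, which prevents deletions from creating new obligations and lets a single maximal set stand in for the exponentially many candidate families.
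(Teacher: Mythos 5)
Your proposal is correct and follows essentially the same route as the paper: reduce to satisfiability in partial $brdg$'s of size at most $s(\varphi)$ via Theorem~\ref{main theorem}, decide partial-$brdg$-hood via Theorem~\ref{partial dlrg thm} by generating all prime filters and iteratively eliminating those violating (M$_\circ$), (M$_\lres$), (M$_\rres$) until the greatest set satisfying them remains, then test (D) on that maximal set, and finally enumerate candidate structures and valuations, with the same $\mathcal O(2^{5|B|})$ and $2^{\mathcal O(s(\varphi)^3)}$ bounds. Your explicit framing of the elimination as a greatest-fixed-point computation of a monotone operator merely makes precise what the paper asserts informally (order-independence of deletions and maximality of the surviving set), so there is no substantive difference.
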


We use Theorem~\ref{cl:BDOs-upper-bound} to obtain \textsf{EXPTIME} upper bounds for satisfiability in $BDBO$ and $BDO$.
The languages of $bdbo$'s and $bdo$'s are denoted by $\sigma^{bdbo}$ and $\sigma^{bdo}$, respectively.

\begin{lemma}  \label{vp dlbo lem}
Let $\varphi$ be a quantifier-free first-order $\sigma^{bdbo}_=$-formula.
Then $\vp$ is satisfiable in a $bdbo$ if, and only if, it is satisfiable in a $brdg$.
\end{lemma}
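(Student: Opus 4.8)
The plan is to use two facts recorded in Section~\ref{sec:rdl}: that the $\lres,\rres$-free reduct of every $brdg$ is a $bdbo$, and that every $bdbo$ $\mathbb{A}$ embeds, via the prime-filter map $\mu$, into (the $\sigma^{bdbo}$-reduct of) the $brdg$ $\mathbb{A}_{\mathfrak F_{\mathbb{A}}}$. Combined with the elementary observation that the truth of a quantifier-free $\sigma^{bdbo}_=$-formula under a valuation is unaffected by expanding the signature and is preserved and reflected by embeddings, the lemma will follow in both directions.

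For the implication from $brdg$ to $bdbo$, I would argue as follows. Suppose $\varphi$ is true in a $brdg$ $\mathbb{A}$ under a valuation $v$. Every term and every atomic subformula of $\varphi$ involves only the symbols $\wedge,\vee,\circ,0,1,\leqslant$, so the truth value of $\varphi$ in $\mathbb{A}$ under $v$ is computed entirely within the $\sigma^{bdbo}$-reduct $\mathbb{A}'$ of $\mathbb{A}$. Since $\mathbb{A}'$ is a $bdbo$, $\varphi$ is satisfiable in a $bdbo$.

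For the converse, suppose $\varphi$ is true in a $bdbo$ $\mathbb{A}$ under a valuation $v$. I would form the $brdg$-frame $\mathfrak F_{\mathbb{A}} = \langle \mathcal P, \subseteq, \mathcal R\rangle$ associated with $\mathbb{A}$ (with $\mathcal R$ as in \eqref{R def}) and the associated $brdg$ $\mathbb{A}_{\mathfrak F_{\mathbb{A}}}$, and use that $\mu(a) = \{\, F \in \mathcal P : a \in F \,\}$ embeds $\mathbb{A}$ into the $\sigma^{bdbo}$-reduct of $\mathbb{A}_{\mathfrak F_{\mathbb{A}}}$: it is injective, a homomorphism for $\wedge,\vee,\circ,0,1$, and satisfies $a \leqslant b$ iff $\mu(a) \subseteq \mu(b)$. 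Hence $\mu(t^{\mathbb{A}}(v(\bar x))) = t^{\mathbb{A}_{\mathfrak F_{\mathbb{A}}}}((\mu\circ v)(\bar x))$ for every term $t$, and, because $\mu$ is injective and order-reflecting, each atomic $\sigma^{bdbo}_=$-subformula of $\varphi$, and therefore $\varphi$ itself, has the same truth value in $\mathbb{A}$ under $v$ as in $\mathbb{A}_{\mathfrak F_{\mathbb{A}}}$ under $\mu\circ v$. Thus $\varphi$ is satisfiable in a $brdg$.

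I do not expect a genuine obstacle here; the only step that needs to be spelled out carefully is the preservation-and-reflection of quantifier-free $\sigma^{bdbo}_=$-formulas along the embedding $\mu$, which is a routine induction: verify it for the atomic formulas $s = t$ and $s \leqslant t$ using injectivity, the homomorphism property, and order-reflection of $\mu$, then close under the Boolean connectives. The same pattern of argument will also serve, with the obvious modifications, for transferring satisfiability between the related classes treated later.
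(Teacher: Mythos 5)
Your proposal is correct and follows exactly the paper's argument: one direction by passing to the $\lres,\rres$-free reduct, the other by embedding the $bdbo$ into ${\mathbb{A}}_{\mathfrak F_{\mathbb{A}}}$ via $\mu$ and composing the valuation with $\mu$. The extra detail you supply on preservation and reflection of quantifier-free formulas along the embedding is just an explicit spelling-out of what the paper leaves implicit.
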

\begin{proof}
If $\vp$ holds in a $brdg$ ${\mathbb{A}}$ under valuation $v$, then
it also holds in the $\lres, \rres$-free reduct of ${\mathbb{A}}$, which is a $bdbo$,
under the same valuation.
Conversely, suppose $\vp$ holds in a $bdbo$ ${\mathbb{A}}$ under valuation $v$.
As shown in Section~\ref{sec:rdl}, ${\mathbb{A}}$ embeds into the $brdg$  ${\mathbb{A}}_{\mathfrak{F}_{\mathbb{A}}}$
by the map $\mu$.
Then, $\vp$ holds in ${\mathbb{A}}_{\mathfrak{F}_{\mathbb{A}}}$ under the valuation $v' = \mu \circ v$.
\end{proof}

The following result now follows immediately from Theorem~\ref{cl:BDOs-upper-bound}.

\begin{corollary}  \label{BDBO ub cor}
Satisfiability in $BDBO$ is in {\em \textsf{EXPTIME}}.
\end{corollary}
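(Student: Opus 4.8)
The plan is to observe that this is an immediate consequence of the two preceding results, so the work has essentially already been done. First I would note that $\sigma^{bdbo}$ is a sublanguage of $\sigma^{brdg}$ (it is obtained by dropping the operation symbols $\lres$ and $\rres$), so every quantifier-free first-order $\sigma^{bdbo}_=$-formula $\vp$ is, verbatim, a quantifier-free first-order $\sig^{brdg}_=$-formula, with the same size $s(\vp)$. Thus the identity map is a (trivially polynomial-time computable) reduction from the input space of satisfiability in $BDBO$ to that of satisfiability in $BRDG$.

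Next I would invoke Lemma~\ref{vp dlbo lem}: for such a $\vp$, satisfiability in a $bdbo$ is equivalent to satisfiability in a $brdg$. Combined with Theorem~\ref{cl:BDOs-upper-bound}, which gives an \textsf{EXPTIME} decision procedure for satisfiability in $BRDG$, this yields an \textsf{EXPTIME} decision procedure for satisfiability in $BDBO$: on input $\vp$, simply run the $BRDG$-algorithm on $\vp$ and return its answer. Correctness is exactly Lemma~\ref{vp dlbo lem}, and the running time is bounded by $\mathcal{O}(2^{ks(\vp)^3})$ as in the proof of Theorem~\ref{cl:BDOs-upper-bound}, since the reduction does not increase the size of the formula.

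There is no real obstacle here; the only points requiring (minimal) care are that the language inclusion does not change the formula or its size, and that Lemma~\ref{vp dlbo lem} is genuinely a two-way equivalence (so both the ``yes'' and ``no'' instances are preserved), both of which are immediate. Accordingly the proof can be stated in a single line: the result follows at once from Lemma~\ref{vp dlbo lem} and Theorem~\ref{cl:BDOs-upper-bound}.
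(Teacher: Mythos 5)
Your proof is correct and follows exactly the paper's route: the paper derives this corollary immediately from Lemma~\ref{vp dlbo lem} together with Theorem~\ref{cl:BDOs-upper-bound}, precisely as you do. The extra details you supply (that the language inclusion preserves the formula and its size, and that the lemma is a two-way equivalence) are accurate but left implicit in the paper.
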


Next, we consider satisfiability in $bdo$'s.
For any quantifier-free first-order $\sigma^{bdo}_=$-formula  $\vp$, let
$\vp^{\circ}$ be the formula obtained by replacing in $\vp$ all terms
of the form $\Diamond t$ by $t \circ 1$.
Then $\vp^{\circ}$ is a quantifier-free first-order formula in the language $\sigma^{bdbo}_=$
(and also in the language $\sigma^{brdg}_=$).

\begin{lemma}  \label{vp* lem}
Let $\vp$ be a quantifier-free first-order $\sigma^{bdo}_=$-formula.
Then $\vp$ is satisfiable in a $bdo$ if and only if, $\vp^{\circ}$ is satisfiable in a $bdbo$.
\end{lemma}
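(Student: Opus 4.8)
The plan is to establish the two directions using the adjunction between $bdo$'s and $bdbo$'s set up in Lemmas~\ref{lem:dlo-to-rdl} and~\ref{lem:rdl-to-dlo}, the point being that the term operation $t \circ 1$ in a $bdbo$ and the term operation $\Diamond t$ in a $bdo$ are interpreted compatibly under the respective constructions ${\mathbb{A}} \mapsto {\mathbb{A}}^{\circ}$ and ${\mathbb{A}} \mapsto {\mathbb{A}}^{\Diamond}$. First I would observe, by induction on term structure, that if ${\mathbb{A}}$ is a $bdo$ and ${\mathbb{A}}^{\circ}$ its associated $bdbo$ (with $x \circ y := \Diamond(x \wedge y)$), then for every $\sigma^{bdo}$-term $t$ and every valuation $v$ into $A$, the value of $t$ in ${\mathbb{A}}$ under $v$ equals the value of $t^{\circ}$ in ${\mathbb{A}}^{\circ}$ under $v$; the only nontrivial case is $t = \Diamond s$, where $(\Diamond s)$ evaluates to $\Diamond(v(s))$ in ${\mathbb{A}}$ while $(t^{\circ}) = s^{\circ} \circ 1$ evaluates to $\Diamond(v(s^{\circ}) \wedge 1) = \Diamond(v(s^{\circ})) = \Diamond(v(s))$ in ${\mathbb{A}}^{\circ}$, using the inductive hypothesis and $v(s^{\circ}) \wedge 1 = v(s^{\circ})$. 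Since $\varphi$ and $\varphi^{\circ}$ have the same Boolean structure over atomic formulas $s = t$ and $s \leqslant t$, it follows that $\varphi$ is true in ${\mathbb{A}}$ under $v$ iff $\varphi^{\circ}$ is true in ${\mathbb{A}}^{\circ}$ under $v$. This gives the forward direction: a $bdo$ witnessing satisfiability of $\varphi$ yields, via ${\mathbb{A}} \mapsto {\mathbb{A}}^{\circ}$, a $bdbo$ witnessing satisfiability of $\varphi^{\circ}$.

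For the converse, suppose $\varphi^{\circ}$ is satisfiable in a $bdbo$ ${\mathbb{A}}$ under some valuation $v$. The naive attempt would be to pass to the $bdo$ reduct ${\mathbb{A}}^{\Diamond}$ of Lemma~\ref{lem:rdl-to-dlo}, where $\Diamond x := x \circ 1$; but this does not immediately work, because in the $bdbo$ ${\mathbb{A}}$ the term $x \circ y$ is a genuine binary operation, and reconstructing $x \circ y$ from $\Diamond(x \wedge y) = (x \wedge y) \circ 1$ is not possible in general. However, $\varphi^{\circ}$ only ever uses $\circ$ in the shape $s \circ 1$: by construction every occurrence of $\circ$ in $\varphi^{\circ}$ has $1$ as its right argument. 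Therefore the relevant term operations of ${\mathbb{A}}$ appearing in $\varphi^{\circ}$ all factor through $x \mapsto x \circ 1 = \Diamond^{{\mathbb{A}}^{\Diamond}} x$. By a second induction on term structure — now over the terms actually occurring in $\varphi^{\circ}$, or equivalently over $\sigma^{bdo}$-terms $t$ and their translates $t^{\circ}$ — one shows that the value of $t^{\circ}$ in ${\mathbb{A}}$ under $v$ equals the value of $t$ in ${\mathbb{A}}^{\Diamond}$ under $v$, with the key step $t = \Diamond s$ handled by $v(s^{\circ} \circ 1) = (v(s^{\circ})) \circ 1 = \Diamond^{{\mathbb{A}}^{\Diamond}}(v(s^{\circ})) = \Diamond^{{\mathbb{A}}^{\Diamond}}(v(s))$. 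Hence $\varphi^{\circ}$ is true in ${\mathbb{A}}$ under $v$ iff $\varphi$ is true in ${\mathbb{A}}^{\Diamond}$ under $v$, and ${\mathbb{A}}^{\Diamond}$ is a $bdo$ by Lemma~\ref{lem:rdl-to-dlo}, completing the converse.

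The main obstacle is making precise the observation that drives the converse: that every occurrence of $\circ$ in $\varphi^{\circ}$ is of the form $(\cdot) \circ 1$, so that the full binary structure of $\circ^{{\mathbb{A}}}$ is irrelevant and only the unary reduct $\Diamond^{{\mathbb{A}}^{\Diamond}}$ matters. This is immediate from the definition of the translation $\varphi \mapsto \varphi^{\circ}$ — every $\circ$ introduced comes from replacing some $\Diamond t$ by $t \circ 1$ — but it should be stated explicitly, perhaps by noting that $\varphi^{\circ}$ is a $\sigma^{bdo}$-formula "in disguise", so that the two inductions are really a single statement: for any $\sigma^{bdo}$-term $t$, any $bdo$ ${\mathbb{B}}$, and any valuation $v$, the value of $t$ in ${\mathbb{B}}$ under $v$ equals the value of $t^{\circ}$ in the $bdbo$ ${\mathbb{B}}^{\circ}$ under $v$, applied once with ${\mathbb{B}} = {\mathbb{A}}$ (forward direction) and once with ${\mathbb{B}} = {\mathbb{A}}^{\Diamond}$ after checking $({\mathbb{A}}^{\Diamond})^{\circ}$ agrees with ${\mathbb{A}}$ on the term operations $t^{\circ}$ relevant to $\varphi^{\circ}$. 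Everything else is routine Boolean bookkeeping over atomic formulas.
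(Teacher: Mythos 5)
Your proposal is correct and follows essentially the same route as the paper: the forward direction passes from a $bdo$ $\mathbb{A}$ to the $bdbo$ $\mathbb{A}^{\circ}$ of Lemma~\ref{lem:dlo-to-rdl} using $\Diamond(t\wedge 1)=\Diamond t$, and the converse passes from a $bdbo$ $\mathbb{A}$ to the $bdo$ $\mathbb{A}^{\Diamond}$ of Lemma~\ref{lem:rdl-to-dlo} using the fact that every occurrence of $\circ$ in $\vp^{\circ}$ has $1$ as its right argument. You merely spell out the term inductions that the paper compresses into ``it suffices to notice''; the extra care about the binary-versus-unary issue in the converse is resolved exactly as the paper implicitly does.
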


\begin{proof}
Suppose that $\vp$ holds in a $bdo$ ${\mathbb{A}}$ under an assignment $v$.
Define the operation $\circ$ on ${\mathbb{A}}$ by $x \circ y := \Diamond (x \wedge y)$.
Then, in view of Lemma~\ref{lem:dlo-to-rdl}, the algebra 
${\mathbb{A}}^{\circ} = \langle A, \wedge, \vee, \circ, 0, 1 \rangle$ is a $bdbo$.
To verify that $\vp^{\circ}$ holds in ${\mathbb{A}}^{\circ}$ under $v$, it suffices to notice that
every term of the form $t \circ 1$ occuring in $\vp^{\circ}$ evaluates
to $\Diamond (t \wedge 1)$, which equals $\Diamond\, t$.

Conversely, suppose that $\vp^{\circ}$ holds in a $bdbo$ ${\mathbb{A}}$ under
an assignment $v$.
Define an operation $\Diamond$ on ${\mathbb{A}}$ by $\Diamond x := x \circ 1$.
Then, in view of Lemma~\ref{lem:rdl-to-dlo}, the algebra
${\mathbb{A}}^\Diamond = \langle A, \wedge, \vee, \Diamond, 0, 1 \rangle$ is a $bdo$.
To verify that $\vp$ holds in ${\mathbb{A}}^\Diamond$ under $v$,
it suffices to notice that every term of the form
$\Diamond\, t$ occuring in $\vp$ evaluates to $t \circ 1$.
\end{proof}

\begin{corollary}
Satisfiability in $BDO$ is in {\em \textsf{EXPTIME}}.
\end{corollary}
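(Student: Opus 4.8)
The plan is to derive this corollary as an immediate consequence of Lemma~\ref{vp* lem} together with Corollary~\ref{BDBO ub cor}, by exhibiting a polynomial-time reduction. Given a quantifier-free first-order $\sigma^{bdo}_=$-formula $\vp$, the first step is to compute the formula $\vp^{\circ}$ obtained by replacing in $\vp$ every subterm of the form $\Diamond t$ by $t \circ 1$; this is clearly computable in time polynomial in $s(\vp)$. The second step is to observe that the size does not blow up: each occurrence of the unary symbol $\Diamond$ is replaced by a single occurrence of the binary symbol $\circ$, the set of variables occurring in the formula is unchanged, and the (fixed, two-element) set of constant symbols is the same for $\sigma^{bdo}$ and $\sigma^{bdbo}$, so in fact $s(\vp^{\circ}) = s(\vp)$; at worst, $s(\vp^{\circ}) = \mathcal{O}(s(\vp))$.

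By Lemma~\ref{vp* lem}, $\vp$ is satisfiable in a $bdo$ if, and only if, $\vp^{\circ}$ is satisfiable in a $bdbo$. Hence, to decide satisfiability of $\vp$ in $BDO$ it suffices to compute $\vp^{\circ}$ and then run the decision procedure of Corollary~\ref{BDBO ub cor} on $\vp^{\circ}$ (equivalently, by Lemma~\ref{vp dlbo lem}, the algorithm of Theorem~\ref{cl:BDOs-upper-bound} applied to $\vp^{\circ}$ regarded as a $\sigma^{brdg}_=$-formula). That procedure runs in time exponential in $s(\vp^{\circ})$, and since $s(\vp^{\circ}) = \mathcal{O}(s(\vp))$, the composite procedure runs in time exponential in $s(\vp)$. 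Put differently, $\vp \mapsto \vp^{\circ}$ is a polynomial-time many-one reduction of satisfiability in $BDO$ to satisfiability in $BDBO$, and \textsf{EXPTIME} is closed under such reductions.

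There is no genuine obstacle here; the whole argument is essentially bookkeeping. The only point that warrants (minor) care is the size estimate $s(\vp^{\circ}) = \mathcal{O}(s(\vp))$, which is what guarantees that an algorithm running in time exponential in $s(\vp^{\circ})$ still runs in time exponential in the size of the original input $\vp$ — a super-polynomial blow-up in the translation would be incompatible with staying inside \textsf{EXPTIME}.
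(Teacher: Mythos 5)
Your argument is correct and follows exactly the paper's route: compute $\vp^{\circ}$ in polynomial time, invoke Lemma~\ref{vp* lem} for the equivalence of satisfiability, and then appeal to Corollary~\ref{BDBO ub cor}. The extra remark about the size bound $s(\vp^{\circ}) = \mathcal{O}(s(\vp))$ is a sensible (if implicit in the paper) piece of bookkeeping, but the approach is the same.
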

\begin{proof}
Observe that constructing the formula $\vp^{\circ}$ from a
quantifier-free first-order $\sigma^{bdo}_=$-formula  $\vp$ can be done in polynomial time.
Thus, the result follows from Lemma~\ref{vp* lem} and Corollary~\ref{BDBO ub cor}.
\end{proof}

\section{Complexity of satisfiability in $BRDG$: Lower bound}
\label{sec:rdl-lower-bound}

In this section, we establish the \textsf{EXPTIME} lower bound for satisfiability in $BDO$.
By embedding $bdo$'s into $bdbo$'s and $brdg$'s we obtain
similar results for the classes $BDBO$ and $BRDG$.
Consequently, we obtain \textsf{EXPTIME}-completeness for satisfiability in $BDO$, $BDBO$ and $BRDG$,
and also for the universal theories of these classes.

The lower bound for $BDO$ is established by a reduction from an \textsf{EXPTIME}-hard
two person corridor tiling problem from~\cite{Chlebus86}.
The reduction proceeds in two steps.  
Starting with an instance, say $T$, of the two person corridor tiling problem, following \cite{Chlebus86}, 
we construct a formula $\phi_T$ in the language of the logic
$L_{[\forall]}$, which is a modal logic with the universal (or, global) modality $[\forall]$ (see~\cite{GP92}).
We use the fact that $T$ is a `good' instance iff $\phi_T$ is satisfiable in a Kripke model.
The argument is similar to \cite{Chlebus86} (see also~\cite{Blackburn}, Theorem 6.52). 
Then, we construct a quantifier-free first-order $\sigma^{bdo}_=$-formula $\varphi_T$
that is satisfiable in a $bdo$ if, and only if, 
$\phi_T$ is satisfiable in a Kripke model.

We start by describing the syntax and semantics of $L_{[\forall]}$. 
The language $\sigma^{m\ell u}$ of $L_{[\forall]}$
consists of the binary operation symbols $\wedge$ and $\vee$, the
unary operation symbols $\neg$, $\Diamond$ and $[\forall]$, and the
constant symbol $\top$.
Formulas, or terms, are defined in the usual way.
We also write $\Box \vp$ for $\neg \Diamond \neg \vp$.

Satisfiability of $\sigma^{m\ell u}$-formulas is defined in terms of Kripke semantics.
For more background on Kripke semantics we refer
the reader to~\cite[Chapter 2]{Blackburn}.
A {\em Kripke frame} is a pair
$\langle W, R \rangle$, where $W$ is a non-empty set of worlds and
$R$ is a binary accessibility relation on $W$. 
A {\em Kripke model} is a
pair $\mathfrak{M} = \langle \mathfrak{F}, V \rangle$, where $\mathfrak{F}$ is a
Kripke frame and $V$ is a valuation function that assigns to every
variable a subset of $W$.  The satisfaction relation between Kripke
models $\mathfrak{M}$, worlds $w$, and $\sigma^{m\ell u}$-formulas $\phi$
is defined as follows:
\begin{itemize}
\item $\mathfrak{M}, w \models p \sameas\ w \in V(p)$, \quad for every variable $p$;
\item $\mathfrak{M}, w \models \top$ always holds;
\item $\mathfrak{M}, w \models \neg \phi_1 \sameas\ \mathfrak{M}, w \not\models \phi_1$;
\item $\mathfrak{M}, w \models \phi_1 \wedge \phi_2 \sameas\ \mathfrak{M}, w \models \phi_1 \;\mbox{and}\;
  \mathfrak{M}, w \models \phi_2$;
\item $\mathfrak{M}, w \models \phi_1 \vee \phi_2 \sameas\ \mathfrak{M}, w \models \phi_1 \;\mbox{or}\;
  \mathfrak{M}, w \models \phi_2$;
\item $\mathfrak{M}, w \models \Diamond \phi_1 \sameas\ R(w, v) \;\mbox{and}\;
  \mathfrak{M}, v \models \phi_1$, for some $v \in W$;
\item $\mathfrak{M}, w \models [\forall] \phi_1 \sameas\ \mathfrak{M}, v \models \phi_1$, for
  every $v \in W$.
\end{itemize}
We say that a $\sigma^{m\ell u}$-formula $\phi$ is {\em satisfiable} in a Kripke
model if there exists a Kripke model $\mathfrak{M}$ and a world $w$ such
that $\mathfrak{M}, w \models \phi$.  

We now describe the two person corridor tiling game from \cite{Chlebus86} on which the 
 two person corridor tiling problem is based.
Our description of the game closely
follows that in \cite[Section 6.8]{Blackburn}.
The game is played by two players, Eloise and Abelard,
who are placing tiles of types $T_1, \ldots, T_{s+1}$ into a grid, or
corridor, comprised of $n$ columns of infinite height, according to
the rules described below.  The boundaries of the grid are delimited
by tiles of a special type $T_0$. Each side of every tile $t$ is
coloured; the colors are denoted by $\lefto(t)$, $\righto(t)$,
$\up(t)$, and $\down(t)$.  If $t$ is of type $T_0$, then
$\lefto(t) = \righto(t) = \up(t) = \down(t)$.  A tile of type
$T_{s+1}$, like those of type $T_0$, is special--it is a ``winning'' tile.

At the start of a play of the game, the initial $n$ tiles are already
in place in the first row of the grid.  The players then take turns to
place tiles into the grid. Eloise goes first in every play of the
game.  The rules of placement are as follows.  The grid has to be
filled continuously, from left to right and from bottom to top.  The
tile $t$ being placed has to match the colours of the neighbouring
tiles already in place, including the boundary tile of type $T_0$, in
the sense that $\lefto(t) = \righto(t')$ for the tile $t'$ to the left
of $t$, $\down(t) = \up(t'')$ for the tile $t''$ straight below $t$, and
$\righto(t) = \lefto(t''')$ for the tile $t'''$, if it exists, to
the right of $t$ (such a tile would always be of type $T_0$).
If a winning tile of type $T_{s+1}$ is placed into the first
column of a row, Eloise wins.  
Otherwise, Abelard wins (in particular, he
wins if the play goes on indefinitely or if one of the players cannot make a move).
 
A winning strategy for a player is defined as usual.
An instance, say $T$, of this corridor tiling game consists of the set $\{ T_0, \dots, T_{s+1} \}$
of tile types, the number $n$ of columns in the corridor and the types $T_{I_1}, \dots, T_{I_n}$
of the tiles placed in the first row.

The problem of deciding, for a given instance $T$ of the two person corridor tiling game, 
if Eloise has a winning strategy in $T$ is called the two person corridor tiling problem;
it is known to be \textsf{EXPTIME}-hard \cite{Chlebus86}.

Given an instance $T$ of the two person corridor tiling game, 
we construct a $\sigma^{m\ell u}$-formula $\phi_T$ such
that $\phi_T$ is satisfiable in a Kripke model if, and only if, Eloise has a winning
strategy in $T$.
We follow closely the construction used in \cite[Theorem 6.52]{Blackburn} for
Propositional Dynamic Logic.

In constructing $\phi_T$, we use the following variables:
\begin{itemize}
\item $p_1, \ldots, p_n$, to represent the grid position into which a
  tile is to be placed in the current round of the play;
\item $c_i(T_u)$, where $i \in \{0, \ldots, n + 1\}$ and
  $u \in \{0, \ldots, s + 1 \}$, to assert that the tile placed in the
  topmost row of column $i$ is of type $T_u$;
\item $e$, to represent whose turn it is to make a move in the current
  round of the play ($e$ for Eloise; $\neg e$ for Abelard);
\item $w$, to assert that the current position is a winning one for
  Eloise, i.e., she has a winning strategy starting from the current
  position;
\item $q_1, \ldots, q_m$ for representing the number of rounds in the
  game, using a binary encoding, where $m = \lceil \log_2 (s+2)^{n+2} \rceil$ (see below).
\end{itemize}

The following $\sigma^{m\ell u}$-formula describes the initial
position of a play:
$$
Init = e \wedge p_1 \wedge c_0 (T_0) \wedge c_1 (T_{I_1}) \wedge \ldots \wedge
c_n (T_{I_n}) \wedge c_{n+1} (T_0).
$$

We now describe the rules of the game with $\sigma^{m\ell u}$-formulas.

\noindent
Tiles are placed in exactly one of the columns 1 though $n$:
  $$
  R_1 = [\forall] ((p_1 \vee \ldots \vee p_n) \wedge \bigwedge_{i = 1}^{n}
  \bigwedge_{j \ne i} (\neg p_i \vee \neg p_j)).
  $$
In every column, a tile of exactly one type has been previously
  placed:
  $$
  R_2 = [\forall] \bigwedge_{i = 0}^{n + 1} (c_i(T_0) \vee \ldots \vee
  c_i(T_{s+1})) \wedge [\forall] \bigwedge_{i = 0}^{n + 1}
  \bigwedge_{u = 0}^{s+1} \bigwedge_{v \ne u} (\neg c_i(T_u) \vee
  \neg c_i(T_v)).
  $$
Columns $0$ and $n+1$ always contain a tile of type $T_0$:
  $$
  R_3 = [\forall] (c_0(T_0) \wedge c_{n+1} (T_0)).
  $$
The tiles are always placed in successive positions:
  $$
  R_4 =  [\forall] ( ( \neg p_1 \vee \Box p_2) \wedge ( \neg p_2 \vee \Box
  p_3) \wedge \ldots \wedge ( \neg p_n \vee \Box p_1)).
  $$
In a column where no tile is being placed, nothing changes once
  a move has been made:
  $$
  R_5 = [\forall] \bigwedge_{i = 0}^{n + 1}  \bigwedge_{u = 0}^{s + 1} ( p_i
  \vee ( (\neg c_i(T_u) \vee \Box c_i(T_u)) \wedge (c_i(T_u) \vee \Box
  \neg c_i(T_u)) ) ).
  $$
Players alternate in their moves:
  $$
  R_6 = [\forall] (( \neg e \vee \Box \neg e ) \wedge ( e \vee \Box  e )).
  $$
Players only place tiles that match the ones placed to the
    left and below:
  $$
  R_7 = [\forall]  \bigwedge_{i=1}^n (( \neg (p_i \wedge c_{i-1} (T') \wedge c_i (T'')) \vee \Box
  \bigvee_{C(T', T'', T)} c_i (T))),
  $$
  where $C(T', T'', T)$ holds if, and only if, $\righto(T') = \lefto(T)$
  and $\up(T'' ) = \down(T )$.
In column $n$, players only place tiles that match the
    boundary tile to the right:
  $$
  R_8 = [\forall] ( \neg p_n \vee \Box \bigvee_{\righto(T) = \lefto(T_0)} c_n (T)).
  $$
At his turn, Abelard can place any tile permitted by the rules
  of the game:
\begin{align*}
  R_9 & = [\forall] \bigwedge_{i = 1}^{n-1} ( \neg (\neg e \wedge p_i \wedge c_{i-1}
  (T'') \wedge c_i (T')) \vee \bigwedge_{C(T', T'', T)} \Diamond c_i
  (T)) \wedge \\
 & [\forall] ( \neg (\neg e \wedge p_n \wedge c_{n-1}
  (T'') \wedge c_n (T')) \vee \bigwedge_{C(T', T'', T), \righto(T) = \lefto(T_0)} \Diamond c_n
  (T)).
\end{align*}

We next say, using $\sigma^{m\ell u}$-formulas, that Eloise has a winning strategy in a play.
The initial position is a winning position for Eloise. 
At all other positions, one of the following holds:
  \begin{itemize}
  \item a winning tile of type $T_{n+1}$ has been placed in column 1;
  \item if Eloise is to move at the current position, then there exists a move
    to a winning position for Eloise;
  \item if Abelard is to move at the current position, then he can
    make a move and all his moves result in a winning position for
    Eloise.
  \end{itemize}
 Thus, we define
  $$
  Win = w \wedge [\forall] ( \neg w \vee c_1( T_{s+1}) \vee (e \wedge \Diamond w )
  \vee (\neg e \wedge \Diamond \top \wedge \Box w ) ).
  $$
The play does not run forever.  
First, notice that, since the number of tile types is finite, the play runs forever if, and only
if, a row in the tiling is repeated in the course of the play.
Second, observe that such a repetition is bound to occur once the
play has gone on for $N = (s+2)^{n+2}$ rounds.  
We can represent all the numbers $0$ through $N$ in binary using propositional variables
  $q_1, \ldots, q_m$, where $m = \lceil \log_2 N \rceil$. 
Let
  $$
  I_0 =  q_1 \vee ( \Box q_1 \wedge \bigwedge_{j=2}^{m} ( ( \neg q_j \vee
  \Box q_j) \wedge ( q_j \vee \Box \neg q_j) ) ),
  $$
  $$
  I_1^i = \neg ( \neg q_{i+1} \wedge \bigwedge_{j=1}^{i} q_j) \vee (
  \Box q_{i+1} \wedge \bigwedge_{j=1}^{i} \Box \neg q_j \wedge
  \bigwedge_{k=i+2}^{m} (( \neg q_k \vee \Box q_k) \wedge ( q_k \vee
  \Box \neg q_k) ) ).
  $$
  Now, let
  $$
  F = \neg q_m \wedge \ldots \wedge \neg q_1 \wedge [\forall] ( I_0 \wedge
  \bigwedge_{i=1}^{m-1} I^i_1) \wedge [\forall] ( \neg q_m \vee \ldots \vee
  \neg q_1 \vee \Box \neg w).
  $$

Finally, define
\[
\phi_T = Init \wedge R_1 \wedge R_2 \wedge R_3 \wedge R_4 \wedge R_5 \wedge R_6
\wedge R_7 \wedge R_8 \wedge R_9 \wedge Win \wedge F.
\]

\begin{theorem}
  \label{claim:tiling-reduction}
  Let $T$ be an instance of the two person corridor tiling game.
  Then, Eloise has a winning strategy in $T$ if, and only if, $\phi_T$
  is satisfiable in a Kripke model.
\end{theorem}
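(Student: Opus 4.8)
The plan is to prove both directions by relating plays of the tiling game to paths in a Kripke model, with the worlds of the model encoding game positions (the contents of the topmost row, whose turn it is, and a round counter).

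\medskip

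\noindent\textbf{From a winning strategy to a satisfiable formula.}
Suppose Eloise has a winning strategy $\mathfrak{s}$ in $T$. I would build a Kripke model $\mathfrak{M} = \langle W, R, V\rangle$ whose worlds are (suitably encoded) game positions that are \emph{reachable under $\mathfrak{s}$}: these are the positions one encounters when Eloise plays according to $\mathfrak{s}$ and Abelard plays arbitrarily (legally). Concretely, a world records the topmost tile in each column $0,\dots,n+1$, the column $p_i$ into which the next tile goes, whose turn it is ($e$ or $\neg e$), and the round number encoded in binary via $q_1,\dots,q_m$. We put $R(u,v)$ exactly when $v$ results from $u$ by one legal move that is consistent with $\mathfrak{s}$ (so at an Eloise position the only $R$-successor is the one dictated by $\mathfrak{s}$, while at an Abelard position every legal move gives a successor). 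The valuation $V$ is read off from the position in the obvious way, and additionally we set $w$ true at exactly those worlds from which $\mathfrak{s}$ is still winning — which, by hypothesis, is \emph{all} of them, since $\mathfrak{s}$ is a winning strategy and we only included reachable positions. Taking $w_0$ to be the initial position, one then verifies $\mathfrak{M}, w_0 \models \phi_T$ by checking each conjunct: $Init$ holds by construction; $R_1$–$R_8$ are local consistency conditions that hold because every world is a legal position and every edge is a legal move; $R_9$ holds because at an Abelard world all legal moves are present as successors; $Win$ holds because $w$ is true everywhere and the disjunction inside $[\forall]$ captures exactly the recursive definition of ``$\mathfrak{s}$ is winning here''; and $F$ holds because the counter $q_1,\dots,q_m$ is initialized to $0$, incremented correctly along every edge (that is what $I_0$ and the $I_1^i$ enforce), and because a winning play must terminate before $N = (s{+}2)^{n+2}$ rounds — no row can repeat on a winning play — so $\Box\neg w$ holds once the counter reaches its maximum.

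\medskip

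\noindent\textbf{From a satisfiable formula to a winning strategy.}
Conversely, suppose $\mathfrak{M}, w_0 \models \phi_T$ for some Kripke model. I would extract a strategy for Eloise by tracing paths through $\mathfrak{M}$: start at $w_0$, and maintain the invariant that the current world satisfies $w$. The formulas $R_1$–$R_3$, $R_5$ guarantee each reachable world encodes a genuine game position; $Init$ pins down the starting row; $R_4$, $R_6$ force the column-pointer and turn to advance correctly; $R_5$, $R_7$, $R_8$ force that every $R$-step from a world corresponds to a legal tile placement matching left, below, and (in column $n$) right. At an Eloise world $u$ satisfying $w$, the $Win$ conjunct gives either $c_1(T_{s+1})$ (she has already won) or $e \wedge \Diamond w$, i.e.\ an $R$-successor satisfying $w$ — this is the move the strategy makes. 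At an Abelard world $u$ satisfying $w$ that is not already a win, $Win$ gives $\neg e \wedge \Diamond\top \wedge \Box w$: Abelard can move (so the play does not get stuck on his account) and \emph{every} successor satisfies $w$, so the invariant is preserved no matter what he does. Thus the strategy keeps the play inside the ``$w$-region'' forever, or until $c_1(T_{s+1})$ appears, which is a win for Eloise. It remains to rule out the play running forever without a win: the $F$ conjunct forces $q_1,\dots,q_m$ to count rounds, so after $N$ rounds the counter is maximal and $F$ forces $\Box\neg w$, contradicting the invariant; hence a winning tile must be placed within $N$ rounds. This yields a winning strategy for Eloise.

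\medskip

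\noindent\textbf{Main obstacle.}
The essential content — the equivalence ``Eloise wins $\iff \phi_T$ satisfiable'' — is standard and follows the Propositional Dynamic Logic argument in \cite[Theorem 6.52]{Blackburn}; the real work is bookkeeping. The step I expect to be most delicate is verifying that the $F$-conjunct, via $I_0$ and the $I_1^i$, genuinely implements a binary counter that increments by exactly one along each $R$-edge and then correctly triggers $\Box\neg w$ at the top value, together with the observation that a non-winning play must repeat a row within $N = (s{+}2)^{n{+}2}$ rounds (there are at most $(s{+}2)^{n{+}2}$ distinct rows, counting the two boundary columns) — so this termination argument, and the matching of the $m = \lceil \log_2 N\rceil$ counter bits against it, is where care is needed. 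Everything else is a routine conjunct-by-conjunct check that local Kripke conditions mirror the tiling rules.
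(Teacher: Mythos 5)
Your proposal is correct and follows essentially the same route as the paper, which simply defers to the standard two-person-tiling/PDL argument of Theorem 6.52 in Blackburn et al.\ (build a model from the reachable positions of a winning strategy; conversely extract a strategy by following $\Diamond w$-successors at Eloise worlds and using $\Box w$ at Abelard worlds, with the $q_1,\dots,q_m$ counter and the conjunct $F$ enforcing termination within $N=(s{+}2)^{n+2}$ rounds). The paper gives no further detail, so your conjunct-by-conjunct sketch is exactly the intended argument.
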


\begin{proof}
  Similar to the proof of Theorem 6.52 from~\cite{Blackburn}.
\end{proof}

Next, given an instance $T$ of the two person corridor tiling game, we construct a 
quantifier-free first-order $\sig^{bdo}_=$-formula $\vp_T$ (based on $\phi_T$) such that $\vp_T$
is satisfiable in a $bdo$ if, and only if, $\phi_T$ is satisfiable
in a Kripke model.
The formula $\vp_T$ is a conjunction of two parts: (1) $Tr(\phi_T)$,
the `translation' of $\phi_T$ and (2) $\zeta_T$, the conjunction of
identities that `simulate' the occurrences of $\neg$ and $\Box$ in
$\phi_T$ (which is needed since the language $\sig^{bdo}_=$
contains neither $\neg$ nor $\Box$).

First, distribute $\neg$ in $\phi_T$ over $\vee$ and $\wedge$ and
eliminate double negations (call the resultant formula $\phi_T$ to avoid notational clutter).  
Then, $\neg$ in $\phi_T$ only applies to variables.
Next, perform the following steps:
\begin{enumerate}
\item For every variable $p$ in $\phi_T$, introduce a fresh variable
  $p'$ and add to $\zeta_T$ the identities $p \vee p' = 1$ and
  $p \wedge p' = 0$.
\item For every subformula of $\phi_T$ of the form $\Box p$, introduce
  a fresh variable $b(p)$ and add to $\zeta_T$ the identities
  $b(p) \vee \Diamond p' = 1$ and $b(p) \wedge \Diamond p' = 0$.  
\item For every subformula of $\phi_T$ of the form $\Box \neg p$,
  introduce a fresh variable $bn(p)$ and add to $\zeta_T$ the identities
  $bn(p) \vee \Diamond p = 1$ and $bn(p) \wedge \Diamond p = 0$.
\item For every subformula of $\phi_T$ of the form
  $\Box ( p_1 \vee \ldots \vee p_k )$, introduce a fresh variable
  $bd(p_1, \ldots, p_k)$ and add to $\zeta_T$ the identities
\begin{align*}  
  & bd(p_1, \ldots, p_k) \vee \Diamond (p'_1 \wedge \ldots \wedge p'_k) = 1 \;\;\mbox{and} \\
 & bd(p_1, \ldots, p_k) \wedge \Diamond (p'_1 \wedge \ldots \wedge p'_k) = 0.
 \end{align*}
\item For every subformula of $\phi_T$ of the form
  $\Box \bigwedge_{i = 1}^{k} \bigvee_{j = 1}^{l} p_{i,j}$, introduce
  a fresh variable $bcd(p_{1,1}, \dots, p_{k,l})$, and add to $\zeta_T$ the identities
\begin{align*}
  & \mbox{$bcd(p_{1,1}, \dots, p_{k,l}) \vee \Diamond \bigvee_{i = 1}^{k} \bigwedge_{j = 1}^{l} p'_{i,j} = 1$}
  \;\; \mbox{and} \\
  & \mbox{$bcd(p_{1,1}, \dots, p_{k,l}) \wedge \Diamond \bigvee_{i = 1}^{k} \bigwedge_{j = 1}^{l} p'_{i,j} = 0$}.
  \end{align*}
\end{enumerate}

Construct a formula $Tr(\phi_T)$ as follows.  
Put $\phi_T$ into the form
$$\hat{\phi}_T = \chi \wedge [\forall] \psi_1 \wedge \ldots \wedge [\forall] \psi_m.$$ 
This can be done by bringing together all the conjucts of $\phi_T$
that do not have occurrences of $[\forall]$ into a single conjunction $\chi$.
Notice that none of $\chi, \psi_1, \ldots, \psi_m$ contain
occurrences of $[\forall]$ since $[\forall]$ never occurs within the
scope of a modal connective.
For every formula in $\{\chi, \psi_1, \ldots, \psi_m\}$, recursively define the
translation $\cdot^\ast$ of $\sigma^{m\ell u}$-formulas into $\sig^{bdo}_=$-terms, as follows:
\begin{center}
  \begin{tabular}{lll}
    $p^\ast$ & = & $p,$ $\;\mbox{where $p$ is a variable outside of the scope of $\neg$ and $\Box$}$; \\
    $(\neg p)^\ast$ & = & $p',$ $\;\mbox{where $p$ is a variable outside of the scope of $\Box$}$; \\
    $(\phi_1 \wedge \phi_2)^\ast$ & = & $(\phi_1)^\ast \wedge (\phi_2)^\ast$, 
    		$\mbox{where $\phi_1 \wedge \phi_2$ is outside of the scope of $\Box$}$; \\
    $(\phi_1 \vee \phi_2)^\ast$ & = & $(\phi_1)^\ast \vee (\phi_2)^\ast$, $\mbox{where $\phi_1 \vee \phi_2$ 
               is outside of the scope of $\Box$}$; \\
    $(\Diamond \phi)^\ast$ & = & $\Diamond ( \phi)^\ast$;  \\
    $(\Box p)^\ast$ & = & $b(p)$;  \\
    $(\Box \neg p)^\ast$ & = & $bn(p)$; 
\end{tabular}
\end{center}
\begin{tabular}{lll}
    $(\Box ( p_1 \vee \ldots \vee p_k ))^\ast$ & = & $bd(p_1, \ldots, p_k)$;  \\
    $(\Box \bigwedge_{i = 1}^{k} \bigvee_{j = 1}^{l} p_{i,j})^\ast$ & = & $bcd(p_{1,1}, \dots, p_{k,l})$. 
\end{tabular}

Now, let $Tr (\phi_T)$ be the following quantifier-free
$\sig^{bdo}_=$-formula:
$$
 (\mbox{\bf not} (\chi^\ast = 0)) \;\mbox{\bf and}\; \psi_1^\ast = 1 \;\mbox{\bf and}\;
\ldots \;\mbox{\bf and}\; \psi_m^\ast = 1.
$$
Finally, let $\vp_T$ be:
$ \zeta_T \;\mbox{\bf and}\; Tr(\phi_T)$.

\begin{lemma}
  Let $\phi_T$ and $\varphi_T$ be formulas constructed, as above, from
  an instance $T$ of the two person corridor tiling game.  Then,
  $\vp_T$ is satisfiable in a $bdo$ if, and only if, $\phi_T$ is
  satisfiable in a Kripke model.
\end{lemma}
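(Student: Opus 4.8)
The plan is to connect the $bdo$-semantics of $\varphi_T$ with the Kripke semantics of $\phi_T$ through the relational representation of $bdo$'s, exploiting two observations. First, a Kripke model $\mathfrak{M} = \langle W, R, V\rangle$ is essentially a valuation $V$ into the (Boolean) $bdo$ $\mathbb{A}_W = \langle 2^W, \cap, \cup, \Diamond, \varnothing, W, \subseteq\rangle$ with $\Diamond X = \{x \in W : R(x,y) \text{ for some } y \in X\}$, and under this identification $[\forall]\psi$ holds at all worlds iff the value of $\psi$ is the top element $W$. Second, the conjuncts of $\zeta_T$ are designed so that, in any $bdo$ in which they hold, the fresh variables are forced to act as Boolean complements: $p'$ as the complement of $p$, $b(p)$ as the complement of $\Diamond p'$ (hence as $\Box p$), and similarly $bn(p)$, $bd(p_1,\dots,p_k)$, $bcd(p_{1,1},\dots,p_{k,l})$. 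This is what lets the translation $(\cdot)^\ast$ compute the correct truth set even though a $bdo$ need not be Boolean.

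For the direction from Kripke satisfiability to $bdo$-satisfiability, suppose $\mathfrak{M}, w_0 \models \phi_T$ for a Kripke model $\mathfrak{M} = \langle W, R, V\rangle$. Work in $\mathbb{A}_W$ and define a valuation $v$ by $v(p) = V(p)$ on the variables of $\phi_T$, $v(p') = W \setminus V(p)$, $v(b(p)) = W \setminus \Diamond(W\setminus V(p))$, and analogously for $bn(p)$, $bd(\cdot)$, $bcd(\cdot)$ (using the De Morgan laws of $2^W$ to identify, e.g., $\Diamond(p'_1 \wedge \cdots \wedge p'_k)$ with $\Diamond$ of the complement of $V(p_1) \cup \cdots \cup V(p_k)$). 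A direct computation shows every identity of $\zeta_T$ holds under $v$. Next, by induction on the $\sigma^{m\ell u}$-subformulas $\theta$ of $\chi, \psi_1, \dots, \psi_m$ (none containing $[\forall]$), one checks $v(\theta^\ast) = \{x \in W : \mathfrak{M}, x \models \theta\}$: the variable, negated-variable, $\wedge$, $\vee$ and $\Diamond$ cases are immediate from the semantics, and each $\Box$-case follows from the defining value of the corresponding fresh variable. Since $\mathfrak{M} \models [\forall]\psi_i$ is equivalent to $v(\psi_i^\ast) = W$ (the top of $\mathbb{A}_W$), and $\mathfrak{M}, w_0 \models \chi$ gives $w_0 \in v(\chi^\ast)$ so $v(\chi^\ast) \neq \varnothing$, we get $\mathbb{A}_W, v \models Tr(\phi_T)$, hence $\mathbb{A}_W, v \models \varphi_T$.

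For the converse, let $\mathbb{A}$ be a $bdo$ with $\mathbb{A}, v \models \varphi_T$, let $\mathfrak{F}_{\mathbb{A}} = \langle \mathcal{P}, \subseteq, \mathcal{R}\rangle$ be its associated $bdo$-frame, and recall that $\mu(a) = \{F \in \mathcal{P} : a \in F\}$ embeds $\mathbb{A}$ into $\mathbb{A}_{\mathfrak{F}_{\mathbb{A}}}$; put $\bar{v} = \mu \circ v$ and define the Kripke model $\mathfrak{M} = \langle \mathcal{P}, \mathcal{R}, V\rangle$ by $V(p) = \bar{v}(p)$ on the variables of $\phi_T$. Because $\mu$ is a bounded-lattice embedding and $\zeta_T$ holds in $\mathbb{A}$, one obtains $\bar{v}(p') = \mathcal{P} \setminus \bar{v}(p)$, $\bar{v}(b(p)) = \mathcal{P} \setminus \Diamond_{\mathbb{A}_{\mathfrak{F}_{\mathbb{A}}}} \bar{v}(p')$, and the analogous equalities for $bn, bd, bcd$. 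Using these, together with the definition of $\Diamond$ in $\mathbb{A}_{\mathfrak{F}_{\mathbb{A}}}$ (equation~\eqref{DB1 def}) and the fact — part of $\mu$ being a $bdo$-embedding (cf.\ Lemma~\ref{JT1 lem}) — that $\mu(\Diamond a) = \Diamond_{\mathbb{A}_{\mathfrak{F}_{\mathbb{A}}}}\mu(a)$, one proves by induction that $\mathfrak{M}, F \models \theta$ iff $F \in \bar{v}(\theta^\ast)$ for every $F \in \mathcal{P}$ and every subformula $\theta$ of $\chi, \psi_1, \dots, \psi_m$; the $\neg$- and $\Box$-clauses reduce to the modal truth conditions precisely via the complementation equalities just mentioned. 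Then $v(\psi_i^\ast) = 1$ gives $\bar{v}(\psi_i^\ast) = \mathcal{P}$, i.e.\ $\mathfrak{M} \models [\forall]\psi_i$; and $v(\chi^\ast) \neq 0$ gives $\bar{v}(\chi^\ast) \neq \varnothing$ (as $\mu$ is injective), so $\mathfrak{M}, F_0 \models \chi$ for some $F_0 \in \mathcal{P}$. Hence $\mathfrak{M}, F_0 \models \chi \wedge [\forall]\psi_1 \wedge \cdots \wedge [\forall]\psi_m$, which is $\phi_T$ up to the logical equivalences (distribution of $\neg$, double-negation elimination, rearrangement of conjuncts) used in its construction, so $\phi_T$ is satisfiable in a Kripke model.

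The main obstacle is the converse direction, and within it the inductive equivalence $\mathfrak{M}, F \models \theta \iff F \in \bar{v}(\theta^\ast)$. A general $bdo$ is not Boolean, so there is no complementation available in $\mathbb{A}$ or $\mathbb{A}_{\mathfrak{F}_{\mathbb{A}}}$ per se; the only thing making the $\neg$- and $\Box$-cases work is the bundle of identities in $\zeta_T$, and one must verify carefully that, after pushing them through the embedding $\mu$, they genuinely pin down $\bar{v}(p')$, $\bar{v}(b(p))$, $\bar{v}(bn(p))$, $\bar{v}(bd(\cdot))$ and $\bar{v}(bcd(\cdot))$ as the required complements — including the De Morgan reshuffling inside the $\Diamond$ in the $bd$- and $bcd$-clauses — so that the induction closes.
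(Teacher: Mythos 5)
Your proposal is correct and follows essentially the same route as the paper: for one direction you take the prime filters of the $bdo$ with the canonical relation $\mathcal R$ as the Kripke model (your use of the embedding $\mu$ and a uniform induction on subformulas is just a more systematic packaging of the paper's direct prime-filter computation, which only verifies a representative conjunct), and for the other you build the power-set $bdo$ over the Kripke frame with the trivial order and read off the complement valuations, exactly as in the paper. The key point you correctly isolate --- that the identities in $\zeta_T$, pushed through the lattice embedding, force $p'$, $b(p)$, $bn(p)$, $bd(\cdot)$ and $bcd(\cdot)$ to denote the required complements even though the $bdo$ is not Boolean --- is precisely the mechanism the paper relies on.
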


\begin{proof}
  Assume that $\vp_T$ is true in some $bdo$
  ${\mathbb{A}} = \langle A, \wedge, \vee, \Diamond, 0, 1, \leqslant
  \rangle$
  under some valuation $v$.  Recall that the $bdo$-frame
  $\mathfrak{F}_{\mathbb{A}}$ associated with ${\mathbb{A}}$ is the
  structure \linebreak $\lb \mathcal P, \sub, \mathcal R \rb$, where
  $\mathcal P$ is the set of prime filters of ${\mathbb{A}}$ and
  $\mathcal R(F,G)$ iff $a \in G$ implies $\Diamond a \in F$, for all
  $a \in A$.  If we omit the order relation, then
  $\lb \mathcal P, \mathcal R \rb$ is a Kripke frame and
  $\mathfrak{M} = \langle \mathcal P, \mathcal R, V \rangle$, where
  $V(p) = \{ F \in \mathcal P \mid v(p) \in F \}$ for each variable
  $p$, is a Kripke model.  We show that
  $\mathfrak{M}, F_0 \models \phi_T$ holds for some
  $F_0 \in \mathcal P$.

  As $v(\chi^\ast) \ne 0$, there exists $F_0 \in \mathcal P$
  containing all of $v(e)$, $v(p_1)$, $v(c_0 (T_0))$,
  $v(c_1 (T_{I_1})), \dots$, $v(c_n (T_{I_n}))$, $v(c_{n+1} (T_0))$,
  $v(w)$, $v(q'_1), \dots, v(q'_m)$.  Therefore, \linebreak
  $\mathfrak{M}, F_0 \models \chi$.  It remains to show that, for
  every subformula of $\hat{\phi}_T$ of the form $[\forall] \psi$, the
  formula $\psi$ is true in $\mathfrak{M}$ at every
  $F \in \mathcal P$.  We consider one such $\psi$ as an example, say
  $\psi_0 = \neg w \vee ( c_1( T_{s+1}) \vee (e \wedge \Diamond w ) )
  \vee (\neg e \wedge \Diamond \top \wedge \Box w ) $,
  which is a subformula of $Win$.  Let $F \in \mathcal P$; we need to
  show that $\mathfrak{M}, F \models \psi_0$.  By assumption, all of
  the following are true in ${\mathbb{A}}$ under $v$ (we henceforth
  omit $v$ for ease of notation):
\begin{align*}
  w' \vee c_1( T_{s+1}) \vee (e \wedge \Diamond w ) \vee ( e' \wedge
  \Diamond \top \wedge b(w)) = 1; \\
    e \vee e' = 1; \quad w \vee w' = 1; \quad b(w) \vee \Diamond w' = 1; \\
    e \wedge e' = 0; \quad w \wedge w' = 0; \quad b(w) \wedge \Diamond w' = 0.
\end{align*}
  Since $1 \in F$, either $w'$ or $c_1( T_{s+1})$ or
  $e \wedge \Diamond w$ or $e' \wedge \Diamond \top \wedge b(w)$
  belongs to $F$.  
  Assume that $w' \in F$.
  As $w \wedge w' = 0$, we
  have $w \notin F$, i.e., $F \notin V(w)$, hence
  $\mathfrak{M}, F \models \neg w$, and so
  $\mathfrak{M}, {F} \models {\psi_0}$.
  It is straightforward to show
  that $\mathfrak{M}, F \models \psi_0$ when $c_1( T_{s+1}) \in F$ and
  when $e \wedge \Diamond w \in F$.
  Assume that
  $e' \wedge \Diamond \top \wedge b(w) \in F$; thus, $e' \in F$,
  $\Diamond \top \in F$, and $b(w) \in F$.  
  As $e \wedge e' = 0$, we
  have $e \notin F$, hence $\mathfrak{M}, F \models \neg e$.  As
  $b(w) \wedge \Diamond w' = 0$, we have $\Diamond w' \notin F$.
  Suppose that $R(F,G)$ for some $G \in P$; then,
  $w' \notin G$, and thus $w \in G$, as $w \vee w' = 1$; therefore,
  $\mathfrak{M}, F \models \Box w$, and so $\mathfrak{M}, F \models \psi_0$.

  Assume, on the other hand, that $\mathfrak{M}, w_0 \models \phi_T$
  for some Kripke model \linebreak
  $\mathfrak{M} = \langle W, R, V \rangle$ and some $w_0 \in W$.  Let
  $\leqslant$ be the trivial partial order on $W$, that is
  $w \leqslant z$ if, and only if, $w = z$.  Then,
  $\mathfrak{F} = \langle W, \leqslant, R \rangle$ is a $bdo$-frame
  and so we can construct its associated $bdo$
  ${\mathbb{A}}_\mathfrak{F} = \langle \mathfrak{U}(W), \cup, \cap,
  \Diamond, \varnothing, W, \sub \rangle$
  as in Section~\ref{sec:rdl}.  It remains to define a valuation $v$
  that will satisfy $\vp_T$.  For the variables of $\phi_T$, let
  $v(p) = \{ w \in W \mid w \in V(p) \}$.  For the variables not
  occuring in $\phi_T$, but occurring in $\vp_T$, the valuation $v$ is
  defined as follows:
  \begin{align*}
    v(p') & = \{ w \in W \mid w \notin V(p) \}; \\
    v(bp)& = \{ w \in W \mid R(w,v) \mbox{ implies } v \in V(p) \}; \\
    v(bn(p)) & = \{ w \in W \mid R(w,v) \mbox{ implies } v \notin V(p)
                   \}; \\
    v(bd(p_1, \ldots, p_k)) & = \{ w \in W \mid R(w,v) \mbox{ implies }
                                  v \in V(p_1) \cup \ldots \cup
                                  V(p_k) \}; \\
    v(bcd(p_{1,1}, \dots, p_{k,l})) & = \{ w \in W \mid R(w,v) \mbox{ implies }
                      v \in \mbox{$\bigcap_{i=1}^{k} \bigcup_{j=1}^{l} V(p_{i,j})$}  \}.
  \end{align*}
It is then straightforward to check that $\vp_T$ is true in ${\mathbb{A}}_\mathfrak{F}$ under $v$.
\end{proof}

Observe that constructing the formula $\vp_T$ from an instance $T$ of the two person corridor tiling game
can be done in polynomial time.
The above results, therefore, give us a polynomial time reduction of the 
two person corridor tiling problem to satisfiability in $BDO$.
Thus, we have the following.

\begin{theorem}
  \label{cl:BDOs-lower-bound}
Satisfiability in $BDO$ is {\em \textsf{EXPTIME}}-hard.
\end{theorem}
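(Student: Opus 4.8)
The plan is to assemble the two reductions already in place into a single polynomial-time many-one reduction from the two person corridor tiling problem to satisfiability in $BDO$; since the former problem is \textsf{EXPTIME}-hard by \cite{Chlebus86}, this transfers the hardness to the latter.

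First I would record the composite equivalence. Given an instance $T$ of the two person corridor tiling game, we have constructed the $\sigma^{m\ell u}$-formula $\phi_T$ and, from it, the quantifier-free $\sigma^{bdo}_=$-formula $\varphi_T = \zeta_T \;\mbox{\bf and}\; Tr(\phi_T)$. By Theorem~\ref{claim:tiling-reduction}, Eloise has a winning strategy in $T$ if, and only if, $\phi_T$ is satisfiable in a Kripke model; by the lemma immediately preceding this theorem, $\phi_T$ is satisfiable in a Kripke model if, and only if, $\varphi_T$ is satisfiable in a $bdo$. Chaining these gives: Eloise has a winning strategy in $T$ if, and only if, $\varphi_T$ is satisfiable in $BDO$.

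Next I would check that the map $T \mapsto \varphi_T$ is computable in polynomial time. This is routine bookkeeping: each of $Init$, $R_1,\dots,R_9$, $Win$ and $F$ has size polynomial in the size of $T$ (for $F$ one uses that $m = \lceil \log_2 (s+2)^{n+2}\rceil = \mathcal{O}((n+2)\log_2(s+2))$ is polynomial, so the counter contributes only polynomially many conjuncts), so $\phi_T$ is of polynomial size; distributing $\neg$ inward and deleting double negations preserves this; the number of fresh variables and identities added to $\zeta_T$ in steps (1)--(5) is at most the number of subformulas of $\phi_T$, hence polynomial; and the translation $\cdot^\ast$ together with the assembly of $Tr(\phi_T)$ and of $\varphi_T$ are clearly polynomial as well.

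Combining a polynomial-time reduction with the \textsf{EXPTIME}-hardness of the source problem yields \textsf{EXPTIME}-hardness of satisfiability in $BDO$, which is the assertion of the theorem. I do not expect any genuine obstacle at this stage: all the substantive work has already been done in Theorem~\ref{claim:tiling-reduction} (a careful adaptation of the Propositional Dynamic Logic argument of \cite[Theorem~6.52]{Blackburn}) and in the preceding lemma (verifying that the identities in $\zeta_T$ force each fresh variable to denote, in the prime-filter frame, exactly the set its intended $\Box$/$\neg$-reading describes, and conversely that these variables admit the stated interpretation in the $bdo$-frame built from a Kripke model). If one had to prove the theorem from scratch, those two equivalences --- and in particular getting $\zeta_T$ to pin down the behaviour of the $\Box$-surrogate variables correctly under the prime-filter construction --- would be the hard part; here they are available by assumption.
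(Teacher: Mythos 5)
Your proposal is correct and matches the paper's own argument: the paper likewise concludes Theorem~\ref{cl:BDOs-lower-bound} by observing that $T \mapsto \varphi_T$ is a polynomial-time reduction and chaining Theorem~\ref{claim:tiling-reduction} with the lemma establishing that $\varphi_T$ is satisfiable in a $bdo$ iff $\phi_T$ is satisfiable in a Kripke model. Your explicit size accounting for the counter variables $q_1,\dots,q_m$ is a detail the paper leaves implicit, but the route is the same.
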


We now apply Theorem~\ref{cl:BDOs-lower-bound} to obtain similar results for the classes $BDBO$ and $BRDG$.

\begin{corollary}
Satisfiability in $BDBO$ and in $BRDG$ are both {\em \textsf{EXPTIME}}-hard.
\end{corollary}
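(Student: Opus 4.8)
The plan is to reduce satisfiability in $BDO$, which is \textsf{EXPTIME}-hard by Theorem~\ref{cl:BDOs-lower-bound}, to satisfiability in $BDBO$ and to satisfiability in $BRDG$, using the translation $\varphi \mapsto \varphi^{\circ}$ already introduced in Section~\ref{sec:dol-complexity}. No new construction is needed; the work consists in chaining the equivalences established above.

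First, for $BDBO$: given a quantifier-free $\sigma^{bdo}_=$-formula $\varphi$, I would form $\varphi^{\circ}$ by replacing every term of the form $\Diamond\, t$ by $t \circ 1$, as before. This can be carried out in polynomial time, and by Lemma~\ref{vp* lem} the formula $\varphi$ is satisfiable in a $bdo$ if, and only if, $\varphi^{\circ}$ is satisfiable in a $bdbo$. Hence $\varphi \mapsto \varphi^{\circ}$ is a polynomial-time reduction of satisfiability in $BDO$ to satisfiability in $BDBO$, and the latter is therefore \textsf{EXPTIME}-hard.

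For $BRDG$, I would chain the previous reduction with Lemma~\ref{vp dlbo lem}: since $\varphi^{\circ}$ is a $\sigma^{bdbo}_=$-formula, and hence also a $\sigma^{brdg}_=$-formula, it is satisfiable in a $bdbo$ if, and only if, it is satisfiable in a $brdg$. Combining this with Lemma~\ref{vp* lem} shows that $\varphi \mapsto \varphi^{\circ}$ is likewise a polynomial-time reduction of satisfiability in $BDO$ to satisfiability in $BRDG$, so satisfiability in $BRDG$ is \textsf{EXPTIME}-hard as well.

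I do not anticipate any genuine obstacle here: the only things to verify are that the translation $\varphi \mapsto \varphi^{\circ}$ is computable in polynomial time (immediate, since it is a local rewriting of terms) and that the cited equivalences apply verbatim to the formulas at hand. All the substantive content is already packaged in Lemmas~\ref{vp dlbo lem} and~\ref{vp* lem} and in Theorem~\ref{cl:BDOs-lower-bound}.
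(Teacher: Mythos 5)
Your proposal is correct and matches the paper's proof in essence: both use the polynomial-time translation $\varphi \mapsto \varphi^{\circ}$ together with Lemmas~\ref{vp* lem} and~\ref{vp dlbo lem} to transfer the lower bound from $BDO$. The only cosmetic difference is that the paper phrases the reduction as starting from the tiling instance $T$ (via the specific formula $\varphi_T$), whereas you reduce generically from satisfiability in $BDO$; the two are interchangeable here.
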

\begin{proof}
Let $\vp_T$ be a quantifier-free first-order $\sig^{bdo}_=$-formula constructed from an instance $T$ of the 
two-person corridor tiling game, as above.
Recall that $\vp_T$ is satisfiable in a $bdo$ if, and only if, 
Eloise has a winning strategy in the game instance $T$.
In polynomial time, we may construct the formula $\vp^{\circ}_T$ from $\varphi_T$ as in Section~\ref{sec:dol-complexity}.
Recall, from Lemma~\ref{vp* lem}, that 
$\vp_T$ is satisfiable in a $bdo$ if, and only if, $\vp^{\circ}_T$ is satisfiable in a $bdbo$.
In addition, by Lemma~\ref{vp dlbo lem}, $\vp^{\circ}_T$ is satisfiable in a $bdbo$ if, and only if,
$\vp^{\circ}_T$ is satisfiable in a $brdg$.
The result now follows from the \textsf{EXPTIME}-hardness of the tiling problem.
\end{proof}

Recall that, for a class $\mathcal K$ and universal sentence $\Phi$, we have
$\mathcal K \models \Phi$ iff $\mbox{\bf not\,}\varphi$ is not
satisfiable in $\mathcal K$, where $\varphi$ is the quantifier-free part of $\Phi$.  
This yields the connection between the complexity of the universal theory of $\mathcal K$
and the complexity of satisfiability in $\mathcal K$.
Noting also that the complement of an  \textsf{EXPTIME}-complete problem is also \textsf{EXPTIME}-complete, we obtain 
our main result.

\begin{theorem}
  \label{thr:BDOs-complexity}
Satisfiability in each of the classes $BDO$, $BDBO$ and $BRDG$ is {\em \textsf{EXPTIME}}-complete.
The universal theory of  each of the classes $BDO$, $BDBO$ and $BRDG$ is
{\em \textsf{EXPTIME}}-complete.
\end{theorem}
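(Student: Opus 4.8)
The plan is to assemble the results already proved in Sections~\ref{sec:dol-complexity} and~\ref{sec:rdl-lower-bound}. For the upper bounds, Theorem~\ref{cl:BDOs-upper-bound} shows that satisfiability in $BRDG$ is in \textsf{EXPTIME}, Corollary~\ref{BDBO ub cor} gives the same for $BDBO$, and the corollary following Lemma~\ref{vp* lem} gives it for $BDO$. For the lower bounds, Theorem~\ref{cl:BDOs-lower-bound} shows that satisfiability in $BDO$ is \textsf{EXPTIME}-hard, and the corollary following it transfers this to $BDBO$ and $BRDG$ by means of the polynomial-time map $\vp \mapsto \vp^{\circ}$ together with the embeddings of $bdo$'s into $bdbo$'s and of $bdbo$'s into $brdg$'s described in Section~\ref{sec:rdl} (Lemmas~\ref{vp* lem} and~\ref{vp dlbo lem}). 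Matching each upper bound with the corresponding lower bound yields that satisfiability in each of $BDO$, $BDBO$ and $BRDG$ is \textsf{EXPTIME}-complete, which is the first assertion of the theorem.

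For the second assertion I would invoke the observation, recorded in Section~\ref{Satisfiability and Universal Theories}, that for any class $\mathcal K$ and any universal sentence $\Phi = (\forall x_1, \dots, x_n)\,\vp$ one has $\mathcal K \models \Phi$ if, and only if, $\mbox{\bf not}\,\vp$ is not satisfiable in $\mathcal K$. The map $\Phi \mapsto \mbox{\bf not}\,\vp$, and, in the other direction, $\psi \mapsto (\forall x_1,\dots,x_n)\,\mbox{\bf not}\,\psi$, is computable in polynomial time, so the universal theory of $\mathcal K$ is polynomial-time equivalent to the complement of satisfiability in $\mathcal K$. Since \textsf{EXPTIME} is a deterministic time complexity class, it is closed under complementation, and hence the complement of an \textsf{EXPTIME}-complete problem is again \textsf{EXPTIME}-complete. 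Applying this to each $\mathcal K \in \{BDO,\, BDBO,\, BRDG\}$ gives that the universal theory of each of these classes is \textsf{EXPTIME}-complete.

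There is no genuine obstacle here: the theorem is a bookkeeping corollary of the preceding sections. The only points that merit a sentence of care are that the three satisfiability problems are linked by polynomial-time reductions (indeed, by syntactic rewritings of linear size), and that passing from satisfiability to the universal theory preserves \textsf{EXPTIME}-completeness precisely because \textsf{EXPTIME} is closed under complement.
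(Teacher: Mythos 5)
Your proposal is correct and follows essentially the same route as the paper: the first assertion is assembled from Theorem~\ref{cl:BDOs-upper-bound}, Corollary~\ref{BDBO ub cor}, the corollary for $BDO$, Theorem~\ref{cl:BDOs-lower-bound} and the corollary transferring hardness to $BDBO$ and $BRDG$, and the second assertion follows from the equivalence $\mathcal K \models \Phi$ iff $\mbox{\bf not}\,\vp$ is not satisfiable in $\mathcal K$ together with closure of \textsf{EXPTIME} under complementation. No gaps.
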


Recall that a quasi-identity (or quasi-equation) is a universal sentence
of the form $(\forall x_1, \dots, x_n)(s_1 = t_1 \;\mbox{\bf and}\; \dots \;\mbox{\bf and}\; s_n = t_n \Rightarrow u = v)$,
and that the quasi-equational theory of a class $\mathcal K$ is the set
of all quasi-identities $\Phi$ such that $\mathcal K \models \Phi$.
Consider a quantifier-free formula $\varphi_T$ constructed from an instance $T$ of the two person corridor tiling game, as above.
Observe that if the existential quantifiers are added to $\varphi_T$, then the resulting sentence is the negation of a quasi-identity.
Thus, the above proofs establish the lower bound for the quasi-equational theory of 
each of the classes $BDO$, $BDBO$ and $BRDG$.
Thus, we have the following result.

\begin{corollary} \label{qe cor}
The quasi-equational theory of each of the classes $BDO$, $BDBO$ and $BRDG$ is {\em \textsf{EXPTIME}}-complete.
\end{corollary}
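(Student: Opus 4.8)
The plan is to obtain the corollary by combining the \textsf{EXPTIME} upper bound already in hand with the lower bound already essentially established in Section~\ref{sec:rdl-lower-bound}.

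For the \emph{upper bound}, I would first note that every quasi-identity is a universal sentence, so the quasi-equational theory of $\mathcal K$ is contained in its universal theory, which is in \textsf{EXPTIME} by Theorem~\ref{thr:BDOs-complexity}, for each $\mathcal K \in \{BDO, BDBO, BRDG\}$. Spelled out: given a quasi-identity $\Phi = (\forall \bar x)(s_1 = t_1 \;\mbox{\bf and}\; \dots \;\mbox{\bf and}\; s_k = t_k \Rightarrow u = v)$, put $\psi := s_1 = t_1 \;\mbox{\bf and}\; \dots \;\mbox{\bf and}\; s_k = t_k \;\mbox{\bf and}\; u \neq v$; then $\mathcal K \models \Phi$ iff $\psi$ is not satisfiable in $\mathcal K$. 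Since satisfiability in $\mathcal K$ is in \textsf{EXPTIME} and \textsf{EXPTIME} is closed under complement, membership in the quasi-equational theory of $\mathcal K$ is decidable in \textsf{EXPTIME}.

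For the \emph{lower bound}, I would use the syntactic form of the reduction formulas. Reading off the construction, $\varphi_T = \zeta_T \;\mbox{\bf and}\; Tr(\phi_T)$ is a conjunction all of whose conjuncts are identities $s = t$, with the single exception of the conjunct $\chi^\ast \neq 0$ coming from $Tr(\phi_T)$; in particular, no atomic formula of the form $s \leqslant t$ occurs. Hence the existential closure of $\varphi_T$ is logically equivalent to the negation of a quasi-identity $\Phi_T$ whose antecedent is the conjunction of the identities occurring in $\varphi_T$ and whose consequent is $\chi^\ast = 0$, and the map $T \mapsto \Phi_T$ is computable in polynomial time. By the reduction underlying Theorem~\ref{cl:BDOs-lower-bound}, $\varphi_T$ is satisfiable in a $bdo$ iff Eloise has a winning strategy in $T$; therefore $BDO \models \Phi_T$ iff Eloise has \emph{no} winning strategy in $T$. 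Since \textsf{EXPTIME} is closed under complement, the complement of the two person corridor tiling problem is \textsf{EXPTIME}-hard, so $T \mapsto \Phi_T$ is a polynomial-time reduction witnessing \textsf{EXPTIME}-hardness of the quasi-equational theory of $BDO$. For $BDBO$ and $BRDG$ I would run the same argument with $\varphi_T^\circ$ in place of $\varphi_T$: substituting $t \circ 1$ for each term $\Diamond t$ sends identities to identities and the lone disequation to a disequation, so the existential closure of $\varphi_T^\circ$ is again the negation of a quasi-identity, and by Lemmas~\ref{vp* lem} and~\ref{vp dlbo lem} it is satisfiable in a $bdbo$ (respectively, a $brdg$) iff $\varphi_T$ is satisfiable in a $bdo$.

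Combining the two bounds yields \textsf{EXPTIME}-completeness. The one delicate point — and the step I would check most carefully — is the opening claim of the lower-bound argument: that $\varphi_T$ and $\varphi_T^\circ$ contain exactly one non-equational literal and no order literals, so that their existential closures really are negations of quasi-identities in the precise sense recalled just before the corollary, and not of existential sentences of a more general shape. This is a routine inspection of the definitions of $\zeta_T$, $Tr(\phi_T)$, and the translation $\cdot^\ast$, together with the observation that the construction of $\varphi_T^\circ$ from $\varphi_T$ in Section~\ref{sec:dol-complexity} preserves this shape.
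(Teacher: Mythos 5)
Your proposal is correct and follows essentially the same route as the paper: the paper's own justification is precisely the observation that quasi-identities are universal sentences (giving the upper bound from Theorem~\ref{thr:BDOs-complexity}) together with the remark that the existential closure of $\varphi_T$ (and likewise of $\varphi_T^{\circ}$) is the negation of a quasi-identity, so the lower-bound reduction carries over. Your write-up merely makes explicit the shape-check on $\zeta_T$ and $Tr(\phi_T)$ and the closure of \textsf{EXPTIME} under complement, both of which the paper leaves as brief remarks.
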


It follows from Corollary~\ref{qe cor} that the consequence relation for Distributive Nonassociative Full
Lambek Calculus with Bounds is \textsf{EXPTIME}-complete.

Since \textsf{P} $\neq$ \textsf{EXPTIME} and \textsf{P} is conventionally considered to be the class of `tractable' problems,
the above results show that the universal and quasi-equational theories for each of the classes $BDO$, $BDBO$ and $BRDG$ are `intractable'.

\section{Special classes of $brdg$'s}
\label{sec:special-classes-rdl}

We now consider complexity of universal theories of special classes of
$brdg$'s that correspond to standard extensions of Distributive Nonassociative Full Lambek Calculus with Bounds.
In particular, we consider the following properties of $\circ$:
\begin{itemize}
  \item[(P1)] {\em commutative}:\; $x \circ y = y \circ x$
  \item[(P2)] {\em decreasing}:\; $x \circ y \leqslant x$ and $x \circ y \leqslant y$
  \item[(P3)] {\em square-increasing}:\; $x \leqslant x \circ x$
  \item[(P4)] {\em unital}:\;  there exists $e$ such that $x \circ e = e \circ x = x$. 
  \end{itemize}

There is a correspondence between the properties of $\circ$
listed above and properties of the relation $R$ on $brdg$-frames $\lb P, \leqslant, R \rb$.
The frame conditions corresponding to (P1--P4) are as follows (see, e.g., \cite{DH}):
\begin{itemize}
\item[(R1)]  $(\forall x, y, z \in P)\, ( R(x, y, z) \Rig R(y, x, z))$
\item[(R2)]  $(\forall x, y , z \in P)\, ( R(x, y ,z) \Rig (x \leqslant z \;\mbox{\bf and}\; y \leqslant z))$
\item[(R3)]  $(\forall x \in P)\, R(x,x,x)$
\item[(R4)] There exists a set $E  \subseteq P$ such that 
\begin{align*}
& (\forall x, y, z \in P) (R(x, y, z) \;\mbox{\bf and}\; y \in E \Rig x \leqslant z) \\
& (\forall x, y, z \in P) (R(x, y, z) \;\mbox{\bf and}\; x \in E \Rig y \leqslant z) \\
& (\forall x \in P)[(\exists y \in E) R(x, y, x) \;\mbox{\bf and}\; (\exists z \in E) R(z, x, x)].
\end{align*}
\end{itemize}
If ${\mathbb{A}}$ is a $brdg$ satisfying any subset of the properties  (P1--P4), then the associated
$brdg$-frame $\mathfrak{F}_{\mathbb{A}}$ satisfies the corresponding properties in (R1--R4).
In addition, if $\mathfrak{F}$ is a $brdg$-frame satisfying any subset of the properties (R1--R4),
then the associated $brdg$ ${\mathbb{A}}_\mathfrak{F}$ satisfies the corresponding properties in (P1--P4).
We also note that in the case of (P4), the upward closure of the set $E$ is the identity element of ${\mathbb{A}}_\mathfrak{F}$.

We seek to characterize partial algebras for the special classes of $brdg$'s 
considered above.
We describe the changes required in the statement and proof of Theorem~\ref{partial dlrg thm} in each case.

In the case that ${\mathbb{A}}$ is a commutative $brdg$, we replace $R^{\mathbb{B}}$ in the statement
of Theorem~\ref{partial dlrg thm} by the relation $R^{\mathbb{B}}_1$ defined by:
\[
     R^{\mathbb{B}}_1(f,g,h)  \leftrightharpoons [R^{\mathbb{B}}(f,g,h) \;\mbox{\bf and}\; R^{\mathbb{B}}(g,f,h)],
\]
where $R^{\mathbb{B}}$ is as before, that is,
 \[
   \begin{array}{rcl}
     R^{\mathbb{B}} ( f, g, h ) &  \leftrightharpoons & (\forall\, (a,b) \in \dom(\circ^{\mathbb{B}}))
                                                     ( a \in f \;\mbox{\bf and}\;
                                                     b \in g \Rig a \circ^{\mathbb{B}} b \in h)
                                                     \;\mbox{\bf and}\;
     \\ & & (\forall\, (a,b) \in \dom(\lres^{\mathbb{B}} )) ( a \in f \;\mbox{\bf and}\; a \lres^{\mathbb{B}} b \in
            g \Rig b \in h) \;\mbox{\bf and}\;
     \\ & & (\forall\, (a,b) \in \dom(\rres^{\mathbb{B}} )) ( b \rres^{\mathbb{B}} a \in f \;\mbox{\bf and}\; a  \in
            g \Rig b \in h).
   \end{array}
\]
For the proof, suppose that ${\mathbb{B}}$ is a partial substructure of a commutative $brdg$ ${\mathbb{A}}$.
As in the proof of Theorem~\ref{partial dlrg thm}, define the set
 \[
  \mathcal{F} := \{ F \cap B \mid F \mbox{ is a prime filter of } {\mathbb{A}} \}.
\]
Then the $brdg$-frame $\langle \mathcal P, \sub, \mathcal R \rangle$ associated with ${\mathbb{A}}$ satisfies (R1)
and it is straightforward to check that \eqref{R and R} holds for the relation $R^{\mathbb{B}}_1$.
It then follows, as in the proof of Theorem~\ref{partial dlrg thm}, that
(D), (M$_\circ$), (M$_\lres$) and (M$_\rres$) hold, with $R^{\mathbb{B}}_1$ replacing $R^{\mathbb{B}}$.
Conversely, if we start with a partial $\sig^{brdg}$-structure ${\mathbb{B}}$ satisfying all the requirements of
Theorem~\ref{partial dlrg thm} with the relation $R^{\mathbb{B}}_1$ replacing $R^{\mathbb{B}}$, then 
the structure $\mathfrak{F} = \lb \mathcal F, \sub, R^{\mathbb{B}}_1 \rb$ satisfies (\ref{FR1}--\ref{FR3}), hence it is a
$brdg$-frame and it clearly satisfies (R1). 
Thus, the $brdg$ ${\mathbb{A}}_\mathfrak{F}$ constructed from $\mathfrak{F}$, and into which ${\mathbb{B}}$ embeds, is commutative.

Commutative $brdg$'s satisfy $x \lres y = y \rres x$ and so a single binary operation $\rig$ is usually used
in place of $\lres$ and $\rres$, replacing $x \lres y$ and $y \rres x$ by $x \rig y$.
With this change the description of partial substructures of commutative $brdg$s can be 
written in a simpler form.
We retain our notation, however, for uniformity of presentation.

For decreasing $brdg$'s the required change to Theorem~\ref{partial dlrg thm} is that $R^{\mathbb{B}}$ is replaced by 
the relation $R^{\mathbb{B}}_2$ defined by:
\[
   R^{\mathbb{B}}_2(f,g,h) \leftrightharpoons [R^{\mathbb{B}}(f,g,h) \;\mbox{\bf and}\; f \sub h \;\mbox{\bf and}\; g \sub h].
\]
If ${\mathbb{B}}$ is a partial substructure of a decreasing $brdg$ ${\mathbb{A}}$, then, in this case, the $brdg$-frame 
$\langle \mathcal P, \sub, \mathcal R \rangle$ associated with ${\mathbb{A}}$ satisfies (R2).
It then follows that  \eqref{R and R} holds for the relation $R^{\mathbb{B}}_2$ and that
(D), (M$_\circ$), (M$_\lres$) and (M$_\rres$) hold.
Conversely, if we start with a partial $\sig^{brdg}$-structure ${\mathbb{B}}$ satisfying all the requirements of
Theorem~\ref{partial dlrg thm} with the relation $R^{\mathbb{B}}_2$ replacing $R^{\mathbb{B}}$, then 
the structure $\mathfrak{F} = \lb \mathcal F, \sub, R^{\mathbb{B}}_2 \rb$ satisfies (\ref{FR1}--\ref{FR3}) and (R2). 
Thus, the $brdg$ ${\mathbb{A}}_\mathfrak{F}$, constructed from $\mathfrak{F}$ and into which ${\mathbb{B}}$ embeds, is decreasing.

For square-increasing $brdg$'s the required change to Theorem~\ref{partial dlrg thm} is to the definition
of a prime filter of the partial structure ${\mathbb{B}}$.
A set $f \sub B$ is now called a prime filter of ${\mathbb{B}}$ if it satisfies (\ref{PF1}--\ref{PF4}), as well as the following:
\begin{align}
& \mbox{$(\forall (a,b) \in \dom(\circ^{{\mathbb{B}}})) (a \in f \;\mbox{\bf and}\; b \in f \Rig a \circ^{{\mathbb{B}}} b \in f)$}, \label{PFnew1} \\
& \mbox{$(\forall (a,b) \in \dom(\lres^{{\mathbb{B}}})) (a \in f \;\mbox{\bf and}\; a \lres^{\mathbb{B}}b \in f \Rig b \in f)$}, \label{PFnew2} \\
& \mbox{$(\forall (a,b) \in \dom(\rres^{{\mathbb{B}}})) (a \rres^{\mathbb{B}} b \in f \;\mbox{\bf and}\; b \in f \Rig a \in f)$.} \label{PFnew3}
\end{align}
If ${\mathbb{B}}$ is a partial substructure of a square-increasing $brdg$ ${\mathbb{A}}$, then the proofs that
(D), (M$_\circ$), (M$_\lres$) and (M$_\rres$) hold (with respect to the relation $R^{\mathbb{B}}$) are as before.
We need only check that each $F \cap B \in \mathcal F$ satisfies (\ref{PFnew1}--\ref{PFnew3}), which
follows from the fact that any prime filter $F$ of a square-increasing $brdg$ is closed under $\circ$.
Conversely, if we start with a partial $\sig^{brdg}$-structure ${\mathbb{B}}$ satisfying all the requirements of
Theorem~\ref{partial dlrg thm} with the additional requirements (\ref{PFnew1}--\ref{PFnew3}) on elements of $\mathcal F$, then 
it is clear that the structure $\mathfrak{F} = \lb \mathcal F, \sub, R^{\mathbb{B}} \rb$ is a $brdg$-frame 
for which $R^{\mathbb{B}}(f,f,f)$ holds for each $f \in \mathcal F$, i.e., (R3) holds. 
Thus, the $brdg$ ${\mathbb{A}}_\mathfrak{F}$, constructed from $\mathfrak{F}$ and into which ${\mathbb{B}}$ embeds, is 
square-increasing.

In the case of unital $brdg$'s, we require that the identity element $e$ be included in the language
so that, by our convention, the identity element is an element of every partial substructure.
Thus, we define the language $\sigma^{brdge}$ as $\sigma^{brdg}$ augmented with a constant symbol $e$
and by a $brdge$ we mean a $\sigma^{brdge}$-algebra whose $\sigma^{brdg}$-reduct is a $brdg$ and that satisfies
$x \circ e = x = e \circ x$. 
To characterize partial $brdge$'s, in the statement of Theorem~\ref{partial dlrg thm}, 
we replace $R^{\mathbb{B}}$ by the relation $R^{\mathbb{B}}_4$ defined by
\begin{align*}
    R^{\mathbb{B}}_4(f,g,h) \Leftrightarrow\; & [R^{\mathbb{B}}(f,g,h) \;\mbox{\bf and} \\
    & (\forall\, a \in B) (a \in f \;\mbox{\bf and}\; e^{\mathbb{B}} \in g \Rig a \in h) \;\mbox{\bf and}\; \\
    & (\forall\, a \in B) (e^{\mathbb{B}} \in f \;\mbox{\bf and}\; a \in g \Rig a \in h)].
\end{align*}
In addition, we require that the following condition holds:
\begin{itemize}
\item[(M$_e$)] $(\forall f \in \mathcal F)[(\exists g \in \mathcal F)(e^{\mathbb{B}} \in g \;\mbox{\bf and}\; R_4^{\mathbb{B}}(f,g,f))$ \\[0,2cm]
$\mbox{}\hspace{3cm} \;\mbox{\bf and}\; (\exists h \in \mathcal F)(e^{\mathbb{B}} \in h \;\mbox{\bf and}\; R_4^{\mathbb{B}}(h, f ,f))]$.
\end{itemize}
For the proof, suppose that ${\mathbb{B}}$ is a partial substructure of a $brdge$ ${\mathbb{A}}$.
Then the $brdg$-frame $\langle \mathcal P, \sub, \mathcal R \rangle$ associated with ${\mathbb{A}}$ satisfies (R4).
Define the set $\mathcal{F}$ as above.
It is straightforward to check that \eqref{R and R} holds for the relation $R^{\mathbb{B}}_4$, from which it follows,
as in the proof of Theorem~\ref{partial dlrg thm}, that
(D), (M$_\circ$), (M$_\lres$) and (M$_\rres$) hold.
To see that (M$_e$) holds, we require the following result.

\begin{lemma} \cite{DH}  \label{id lem}
Let ${\mathbb{A}}$ be a $brdge$.
For any prime filter $F$ of ${\mathbb{A}}$, there exists a prime filter $G$ of ${\mathbb{A}}$ with $e^{\mathbb{A}} \in G$
and $\mathcal R(F,G,F)$ and there exists a prime filter $H$ of ${\mathbb{A}}$ with $e^{\mathbb{A}} \in H$ and 
$\mathcal R(H,F,F)$.
\end{lemma}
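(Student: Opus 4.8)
The plan is to reduce both claims to applications of the prime filter separation theorem for bounded distributive lattices (a filter disjoint from an ideal extends to a prime filter still disjoint from that ideal), after reformulating the conditions $\mathcal R(F,G,F)$ and $\mathcal R(H,F,F)$ via Lemma~\ref{cl:pastbox-rel}. Note first that since ${\mathbb{A}}$ possesses a prime filter $F$ it is nontrivial, so $e^{\mathbb{A}} \neq 0^{\mathbb{A}}$ and $\uparrow e^{\mathbb{A}}$ is a proper filter.

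For the prime filter $G$: by Lemma~\ref{cl:pastbox-rel}(ii), $\mathcal R(F,G,F)$ holds if, and only if, $a \lres^{\mathbb{A}} b \notin G$ whenever $a \in F$ and $b \notin F$. So I look for a prime filter $G \supseteq\, \uparrow e^{\mathbb{A}}$ that is disjoint from the ideal $I$ generated by $S := \{ a \lres^{\mathbb{A}} b \mid a \in F,\ b \notin F\}$; such a $G$ contains $e^{\mathbb{A}}$ and, by the equivalence just quoted, satisfies $\mathcal R(F,G,F)$. The one nonroutine point is to check $\uparrow e^{\mathbb{A}} \cap I = \varnothing$. If it failed, then $e^{\mathbb{A}} \leqslant (a_1 \lres^{\mathbb{A}} b_1) \vee \cdots \vee (a_n \lres^{\mathbb{A}} b_n)$ with each $a_i \in F$ and $b_i \notin F$ (and $n \geqslant 1$, the empty join case being excluded by $e^{\mathbb{A}} \neq 0^{\mathbb{A}}$). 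Setting $a := a_1 \wedge \cdots \wedge a_n \in F$ and $b := b_1 \vee \cdots \vee b_n$, and using that $\lres$ is order-reversing in its first argument and order-preserving in its second, one gets $a_i \lres^{\mathbb{A}} b_i \leqslant a \lres^{\mathbb{A}} b$ for each $i$, hence $e^{\mathbb{A}} \leqslant a \lres^{\mathbb{A}} b$; by residuation \eqref{Res} this yields $a = a \circ^{\mathbb{A}} e^{\mathbb{A}} \leqslant b$, so $b \in F$, and primeness of $F$ gives some $b_i \in F$, a contradiction. The separation theorem now produces the required $G$.

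For the prime filter $H$ the argument is the exact mirror image, using Lemma~\ref{cl:pastbox-rel}(iii): $\mathcal R(H,F,F)$ holds iff $b \rres^{\mathbb{A}} a \notin H$ whenever $a \in F$, $b \notin F$, so I separate $\uparrow e^{\mathbb{A}}$ from the ideal generated by $\{ b \rres^{\mathbb{A}} a \mid a \in F,\ b \notin F\}$; the disjointness check is the same, now using that $\rres$ is order-preserving in its first argument and order-reversing in its second to obtain $e^{\mathbb{A}} \leqslant b \rres^{\mathbb{A}} a$ and then $a = e^{\mathbb{A}} \circ^{\mathbb{A}} a \leqslant b$ by residuation. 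The only real obstacle is orienting the monotonicity of $\lres$ (resp.\ $\rres$) so that the whole join collapses under a single residuation step together with the unit law $a \circ^{\mathbb{A}} e^{\mathbb{A}} = a = e^{\mathbb{A}} \circ^{\mathbb{A}} a$; everything else is the standard separation machinery plus Lemma~\ref{cl:pastbox-rel}.
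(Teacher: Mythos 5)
Your proof is correct. Note that the paper itself gives no proof of this lemma --- it is cited from \cite{DH} --- so there is nothing internal to compare against; your argument is a valid, self-contained derivation. The key steps all check out: the reformulation of $\mathcal R(F,G,F)$ via Lemma~\ref{cl:pastbox-rel}(ii) as ``$a \lres b \notin G$ whenever $a \in F$, $b \notin F$'' is the correct contrapositive; the disjointness of $\uparrow e^{\mathbb{A}}$ from the ideal generated by $\{a \lres b \mid a \in F,\ b \notin F\}$ is exactly the nontrivial content, and your collapse of the finite join using the monotonicity of $\lres$ (order-reversing in the first argument applied to $a \leqslant a_i$, order-preserving in the second applied to $b_i \leqslant b$), followed by the single residuation step $e \leqslant a \lres b \Leftrightarrow a \circ e \leqslant b$ and the unit law, is sound; primeness of $F$ correctly rules out $b = b_1 \vee \cdots \vee b_n \in F$. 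The mirror argument for $H$ via Lemma~\ref{cl:pastbox-rel}(iii) is likewise correct. This is the standard separation-theorem proof one would expect to find in \cite{DH}.
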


Let $f \in \mathcal F$, so $f = F \cap B$ for some prime filter $F$ of ${\mathbb{A}}$.
By Lemma~\ref{id lem}, there exists a prime filter $G$ of ${\mathbb{A}}$ such that $e^{\mathbb{A}} \in G$ and $\mathcal R(F,G,F)$.
Then, for $g := G \cap B$, we have $e^{\mathbb{B}} = e^{\mathbb{A}}  \in g$, while $R_4^{\mathbb{B}}(f,g,f)$
follows from \eqref{R and R}.
The required $h$ is obtained similarly. 

Conversely, suppose ${\mathbb{B}}$ is a partial $\sig^{brdge}$-structure satisfying the requirements above.
Then $\lb \mathcal F, \sub, R_4^{\mathbb{B}} \rb$ is a $brdg$-frame; we show that it satisfies (R4)
so that the constructed $brdg$ ${\mathbb{A}}_{\mathfrak F}$ is a $brdge$.
Set $\mathcal E := \{ g \in \mathcal F : e^{\mathbb{B}} \in g \}$.
Suppose $f,g,h \in \mathcal F$ are such that $R_4^{\mathbb{B}}(f,g,h)$ and $g \in \mathcal E$, and let $a \in f$. 
Since $e^{\mathbb{B}} \in g$, we have $a \in h$, by definition of $R_4^{\mathbb{B}}$. 
Thus, $f \sub h$.
Similarly, if $R_4^{\mathbb{B}}(f,g,h)$ and $f \in \mathcal E$, then $g \sub h$.
The third condition in (R4) is immediate from (M$_e$).
Lastly, the identity element of ${\mathbb{A}}_{\mathfrak F}$ is $\mathcal E$ and
the embedding $\mu$, as defined in Theorem~\ref{partial dlrg thm}, satisfies 
$\mu(e^{\mathbb{B}}) = \{ f \in \mathcal F : e^{\mathbb{B}} \in f \} = \mathcal E$.

We note that the above adaptations to the characterizations of partial substructures of $brdg$'s satisfying properties
(P1--P4) can be combined in obvious ways to provide characterizations of partial substructures of
$brdg$'s satisfying any subset of the properties (P1--P4).
In addition, the \textsf{EXPTIME}-algorithm for satisfiability in $BRDG$ described in Section~\ref{sec:dol-complexity} 
can be easily adapted to cater for these characterizations.
Indeed, for (P1) and (P2) we need only use a different version of $R^{\mathbb{B}}$, and for (P3)
we need only check that the prime filters satisfy the additional requirements.
For (P4), we require relation $R_4^{\mathbb{B}}$ and we
must additionally check condition (M$_e$) for each $f$ during each traversal of the current set $\mathcal F$.
If there does not exist $g \in \mathcal F$ with $e^{\mathbb{B}} \in g$ and $R^{\mathbb{B}}(f,g,f)$, then
we remove $f$ from $\mathcal F$; similarly, for $h$.
This does not change the exponential running time of the algorithm.
Thus, by the methods of Section~\ref{sec:dol-complexity}, we get an upper bound of exponential time
for the complexity of satisfiability in the class of $brdg$'s with any combination of the additional properties (P1--P4).
As regards the lower bound, we can only infer the exponential time lower bound for $brdg$'s with
the commutative property, which follows from the fact that the operation
$\circ$ defined in Lemma~\ref{lem:dlo-to-rdl} is commutative.
Thus, we have the following result.

\begin{theorem}  \label{special classes thm}
For any subset $\mathcal Q$ of the properties {\em (P1--P4)}, let 
$BRDG^{\mathcal Q}$ be the class of all $brdg$'s satisfying the properties in $\mathcal Q$.
Then, satisfiability in $BRDG^{\mathcal Q}$ and the universal theory of $BRDG^{\mathcal Q}$ are both in {\em \textsf{EXPTIME}}.
In the case where $\mathcal Q$ contains only {\em (P1)}, that is, for commutative $brdg$'s,
satisfiability in $BRDG^{\mathcal Q}$ and the universal theory of $BRDG^{\mathcal Q}$ are both {\em \textsf{EXPTIME}}-complete.
\end{theorem}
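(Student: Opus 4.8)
The plan is to combine the upper-bound machinery of Section~\ref{sec:dol-complexity} with the modified partial-algebra characterizations just established, and then invoke the lower-bound reduction for the commutative case. For the upper bound, fix a subset $\mathcal Q \sub \{(P1), (P2), (P3), (P4)\}$. The key observation is that Theorem~\ref{main theorem} applies verbatim to the class $BRDG^{\mathcal Q}$: a quantifier-free $\sig^{brdg}_=$-formula $\vp$ (resp.\ a $\sig^{brdge}_=$-formula, if $(P4) \in \mathcal Q$) is satisfiable in $BRDG^{\mathcal Q}$ iff it is satisfiable in a partial substructure of a member of $BRDG^{\mathcal Q}$ of cardinality at most $s(\vp)$. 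So I would first recall that the adaptations described above give, for each choice of $\mathcal Q$, a characterization of exactly the partial substructures of members of $BRDG^{\mathcal Q}$ — this is just the conjunction of the relevant modifications: replace $R^{\mathbb B}$ by whichever of $R^{\mathbb B}_1, R^{\mathbb B}_2, R^{\mathbb B}_4$ (and combinations) is appropriate, add clauses (\ref{PFnew1}--\ref{PFnew3}) to the notion of prime filter if $(P3) \in \mathcal Q$, and add condition (M$_e$) if $(P4) \in \mathcal Q$.

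Next I would observe that the decision algorithm of Section~\ref{sec:dol-complexity} adapts with only cosmetic changes and no change in asymptotic running time. Checking whether a finite partial structure ${\mathbb B}$ is a partial $brdg$ in $BRDG^{\mathcal Q}$ still proceeds by: (1) a polynomial-time check of the partial-bounded-lattice conditions; (2) generating the set of all (modified) prime filters of ${\mathbb B}$, which for the square-increasing case means additionally verifying (\ref{PFnew1}--\ref{PFnew3}) for each candidate $S \sub B$ — still polynomial per subset, so still $\mathcal O(r(|B|) \cdot 2^{|B|})$ overall; (3) the iterative elimination procedure, now using the appropriate relation $R^{\mathbb B}_i$ (whose verification remains polynomial in $|B|$) and, when $(P4) \in \mathcal Q$, additionally deleting any $f$ for which no witness $g$ (resp.\ $h$) with $e^{\mathbb B} \in g$ and $R^{\mathbb B}_4(f,g,f)$ (resp.\ $R^{\mathbb B}_4(h,f,f)$) exists — this is again a polynomial test inside at most $2^{|B|}$ traversals; (4) the final check of (D). All steps stay within time $\mathcal O(2^{5|B|})$, and since we test all structures with $|B| \le s(\vp)$ and all valuations, the total running time is still $\mathcal O(2^{k \cdot s(\vp)^3})$, i.e.\ exponential in $s(\vp)$. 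Hence satisfiability in $BRDG^{\mathcal Q}$, and therefore its universal theory, is in \textsf{EXPTIME}.

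For the lower bound in the commutative case, I would argue as follows. Let $T$ be an instance of the two-person corridor tiling game and $\vp_T$ the quantifier-free $\sig^{bdo}_=$-formula constructed in Section~\ref{sec:rdl-lower-bound}, which is satisfiable in a $bdo$ iff Eloise wins $T$. In Lemma~\ref{vp* lem} the $bdo$ witnessing satisfiability of $\vp_T$ is turned into a $bdbo$ via $x \circ y := \di(x \wedge y)$, and this $\circ$ is commutative (as remarked right after Lemma~\ref{lem:dlo-to-rdl}). Conversely, if $\vp^{\circ}_T$ is satisfiable in a commutative $bdbo$, then the $\di$ recovered by Lemma~\ref{lem:rdl-to-dlo} gives a $bdo$ satisfying $\vp_T$; and by the embedding $\mu$ of Section~\ref{sec:rdl}, the commutative $bdbo$ embeds into a commutative $brdg$ — one must check here that the frame relation $\mathcal R$ built from a commutative $\circ$ satisfies (R1), so that ${\mathbb A}_{\mathfrak F_{\mathbb A}}$ is commutative, which is immediate from the symmetry of definition \eqref{R def}. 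Thus $\vp^{\circ}_T$ is satisfiable in a commutative $brdg$ iff Eloise wins $T$, giving a polynomial-time reduction and \textsf{EXPTIME}-hardness of satisfiability in $BRDG^{\{(P1)\}}$; combined with the upper bound and the complementation argument relating satisfiability to the universal theory, we get \textsf{EXPTIME}-completeness of both.

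The main obstacle is not any single deep step but making sure the bookkeeping across the four properties is uniform: in particular, verifying that \eqref{R and R} continues to hold for each modified relation $R^{\mathbb B}_i$ (this is where the correspondences (P$i$)$\leftrightarrow$(R$i$) of \cite{DH} are used), that the constructed frame $\lb \mathcal F, \sub, R^{\mathbb B}_i \rb$ genuinely satisfies (R$i$) so that ${\mathbb A}_{\mathfrak F}$ lands in $BRDG^{\mathcal Q}$, and — for $(P4)$ — that condition (M$_e$) is exactly what is needed to recover the third clause of (R4) while the first two clauses fall out of the definition of $R^{\mathbb B}_4$. Since the text above already records all of these adaptations and their combinations, the proof of the theorem is essentially the assembly of those facts together with the observation that none of them disturbs the exponential time bound.
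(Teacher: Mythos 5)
Your proposal is correct and follows essentially the same route as the paper: the paper's proof of this theorem is precisely the preceding discussion in Section~\ref{sec:special-classes-rdl} (the modified partial-structure characterizations via $R^{\mathbb{B}}_1$, $R^{\mathbb{B}}_2$, the strengthened prime filters, $R^{\mathbb{B}}_4$ with (M$_e$), and their combinations), combined with the observation that the Section~\ref{sec:dol-complexity} algorithm absorbs these changes without leaving \textsf{EXPTIME}, and with the commutativity of $x \circ y := \Diamond(x \wedge y)$ from Lemma~\ref{lem:dlo-to-rdl} driving the lower bound for the commutative case. Your only looseness is calling the witness search for (M$_e$) a ``polynomial test'' (it ranges over exponentially many candidate filters), but this does not affect the overall exponential bound and matches the paper's own level of detail.
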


\section{Discussion}

We conclude by mentioning some unresolved problems closely related to the results presented in this paper.

Theorem~\ref{special classes thm} provides an upper bound for the complexity of the universal theory of
each class $BRDG^{\mathcal Q}$, where $\mathcal Q$ is a subset of (P1--P4).
Tight complexity bounds for these classes are not known, with three exceptions.
One is the case where $\mathcal Q$ is empty, that is for $BRDG$; 
another is where $\mathcal Q$ contains just the commutative property (P1), as discussed in the previous section.
The other is the case where $\mathcal Q$ contains all properties (P1--P4), or even just (P2--P3), from which (P1) and (P4) follow;
in this case, $BRDG^{\mathcal Q}$ is the variety of Heyting algebras, whose universal theory is known to be \textsf{PSPACE}-complete
(see \cite{VA13}).

If we remove the bounds $0$ and $1$ from the language of $brdg$'s, we obtain the class of `residuated distributive
lattice-ordered groupoids', or $rdg$'s for short.
Since any $rdg$ can be embedded into a $brdg$ (in a straightforward way),
the \textsf{EXPTIME} upper bound for the universal theory of $BRDG$ holds for 
the class of $rdg$'s as well; however, the proof of the \textsf{EXPTIME}
lower bound presented in this paper requires the bounds, so the tight complexity of the universal theory of this class 
remains unknown.

Lastly, the complexity of the equational theories of $BRDG$, $BDO$ and $BDBO$, to the best of our knowledge, has not been settled.

\section{Acknowledgements}

We are indebted to the anonymous reviewers for
their suggestions that have helped to improve the paper.

\bibliographystyle{plain}

\end{document}